\newcommand{\dd}{\mathrm{d}}
\newcommand{\Met}{\mathrm{Met}}
\newcommand{\scal}{\mathrm{scal}}
\newcommand{\R}{\mathds{R}}
\newcommand{\C}{\mathds{C}}
\newcommand{\Hr}{\mathds{H}}
\newcommand{\N}{\mathds{N}}
\newcommand{\Sp}{\mathrm{Sp}}
\newcommand{\U}{\mathrm{U}}
\newcommand{\Spin}{\mathrm{Spin}}
\newcommand{\vol}{\mathrm{vol}}
\newcommand{\Vol}{\mathrm{Vol}}
\newcommand{\g}{\mathtt g}
\newcommand{\h}{\mathtt h}
\renewcommand{\k}{\mathtt k}
\newcommand{\gt}{\mathtt g_t}
\newcommand{\hs}{\mathtt h_t}
\newcommand{\kt}{\mathtt k_t}
\theoremstyle{plain}\newtheorem{theorem}{Theorem}[]
\theoremstyle{plain}\newtheorem{lemma}[theorem]{Lemma}
\theoremstyle{plain}\newtheorem{proposition}[theorem]{Proposition}
\theoremstyle{plain}\newtheorem{corollary}[theorem]{Corollary}
\theoremstyle{plain}\newtheorem*{question}{\sc Question}
\theoremstyle{definition}\newtheorem{definition}[theorem]{Definition}
\theoremstyle{remark}\newtheorem{remark}[theorem]{Remark}
\theoremstyle{remark}
\numberwithin{equation}{section}
\numberwithin{theorem}{section}
\title[Yamabe problem on homogeneous spheres]{Bifurcation and local rigidity of homogeneous solutions to the Yamabe problem on spheres}
\author{R. G. Bettiol \and P. Piccione}
\address{\begin{tabular}{lll}
Department of Mathematics & & Departamento de Matem\'atica \\
University of Notre Dame & &Universidade de S\~ao Paulo \\
255 Hayes Healy Center & & Rua do Mat\~ao, 1010 \\
46556-4618 Notre Dame, IN, USA & & 05508-090 S\~ao Paulo, SP, Brazil\\
\emph{E-mail address}: {\tt rbettiol@nd.edu} & & \emph{E-mail address}: {\tt piccione@ime.usp.br}
\end{tabular}
}
\date{June 2012}
\thanks{The first named author is supported by the NSF grant DMS-0941615. The second named author is partially supported by Fapesp, S\~ao Paulo, Brazil, and by CNPq, Brazil.}
\subjclass[2010]{Primary: 53C30, 58J55; Secondary: 53C20, 53C24, 58E07, 58E50}
\begin{document}
\begin{abstract}
We study existence and non-existence of constant scalar curvature metrics conformal and arbitrarily close to homogeneous metrics on spheres, using variational techniques. This describes all critical points of the Hilbert-Einstein functional on such conformal classes, near homogeneous metrics. Both bifurcation and local rigidity type phenomena are obtained for $1$-parameter families of $\U(n+1)$, $\Sp(n+1)$ and $\Spin(9)$-homogeneous metrics. 
\end{abstract}

\maketitle

\vspace{-.5cm}
\section{Introduction}
Given a closed Riemannian manifold $(M,g)$, with $\dim M=m\geq3$, the Yamabe problem concerns the existence of constant scalar curvature metrics conformal to $g$. Solutions to this problem can be characterized variationally as critical points of the Hilbert-Einstein functional restricted to the conformal class $[g]$. The proof that such a solution always exist is a consequence of the successive works of Yamabe~\cite{Yam60}, Trudinger~\cite{Tru68}, Aubin~\cite{Aub76} and Schoen~\cite{Schoen84}, and provides minimizers of the Hilbert-Einstein functional in each conformal class. For instance, Einstein metrics are minima of the functional in their conformal class. In fact, except for round metrics on spheres, they are the \emph{unique} metrics in their conformal class having constant scalar curvature, see Obata~\cite{Oba72}. Anderson~\cite{And05} proved that, generically, the Hilbert-Einstein functional restricted to a conformal class has a unique critical point. Nevertheless, there are many other families of constant scalar curvature metrics that are critical points of this functional, but not necessarily minimizers. This raises the following interesting problem:

\begin{question}
Given a closed Riemannian manifold $(M,g)$, classify all critical points of the Hilbert-Einstein functional restricted to the conformal class $[g]$.
\end{question}

In this paper, we partially answer the above question when $(M,g)$ is the sphere $S^m$ equipped with a homogeneous metric (recall that any such metric is a solution to the Yamabe problem). Before giving details about our results, let us recall a few previous works on the above question. Schoen~\cite{Schoen91} proved existence of an increasing number of solutions, with large energy and Morse index, in the conformal class of the product $S^1(r)\times S^{m-1}$ of round spheres, as the radius $r$ of the circle tends to infinity. A remarkable result regarding non-uniqueness of solutions is due to Pollack~\cite{pollack}, that proved existence of arbitrarily $C^0$-small perturbations of any given metric, with arbitrarily large number of solutions in its conformal class. Regarding perturbations of the round metric on the sphere, Ambrosetti and Malchiodi~\cite{am} obtained the existence of at least $2$ solutions for certain deformations. Later, Berti and Malchiodi~\cite{bm} obtained the existence of a sequence of unbounded solutions (in the $L^\infty$ sense) among other multiplicity results, also for perturbations of the round metric. Recently, Lima, Piccione and Zedda~\cite{lpz} studied the family of metrics on a product $M_1\times M_2$ of compact Riemannian manifolds obtained by rescaling one of the factors, similarly to the case of $S^1(r)\times S^{m-1}$ mentioned above, obtaining \emph{bifurcation} of solutions.

In a natural continuation of the latter, we study bifurcation of solutions from families of homogeneous metrics on spheres. More precisely, our main results (Theorems~\ref{thm:s2n+1}, \ref{thm:s4n+3} and \ref{thm:s15}) distinguish homogeneous metrics that are a \emph{bifurcation point} (i.e., an accumulation point of other solutions conformal to homogeneous solutions) from homogeneous metrics that are \emph{locally rigid} (i.e., not a bifurcation point). This local rigidity means uniqueness in some neighborhood inside its conformal class. For a precise definition of these notions, see Definition~\ref{def:biflocrig}. Thus, these results completely describe the arrangement of the critical points of the Hilbert-Einstein functional on conformal classes of homogeneous metrics on $S^m$ in a vicinity (in the $C^{k,\alpha}$ topology) of the set of homogeneous metrics. We note that this was already known only for the round metric, which belongs to all families of homogeneous metrics and it is trivially a bifurcation value for all of them.

Homogeneous metrics on spheres were classified by Ziller~\cite{ziller} in 1982. All such metrics arise as a deformation of the round metric by scaling it in the direction of the fibers of a Hopf fibration (these manifolds are also often referred to as \emph{Berger spheres}). More precisely, the total space of each Hopf fibration
\begin{equation*}
S^1\to S^{2n+1}\to\C P^n, \quad S^3\to S^{4n+3}\to\Hr P^n, \quad S^7\to S^{15}\to S^8\left(\tfrac12\right)
\end{equation*}
admits a family of homogeneous metrics, depending on one parameter $t>0$, obtained by scaling the round metric by a factor $t^2$ in the subbundle tangent to the Hopf fibers. This gives rise to:
\begin{itemize}
\item[$\gt$,] a $1$-parameter family of $\U(n+1)$-homogeneous metrics on $S^{2n+1}$;
\item[$\hs$,] a $1$-parameter family of $\Sp(n+1)$-homogeneous metrics on $S^{4n+3}$;
\item[$\kt$,] a $1$-parameter family of $\Spin(9)$-homogeneous metrics on $S^{15}$.
\end{itemize}
In addition, $S^{4n+3}$ carries a $3$-parameter family of $\Sp(n+1)$-homogeneous metrics given by rescaling any left-invariant metric on the fibers $S^3$ in the same fashion. Any homogeneous metric on $S^m$ is isometric to one of the above. In this paper, we deal with metrics in each of the three $1$-parameter families $\gt,\hs$ and $\kt$, since bifurcation and local rigidity phenomena are naturally cast for $1$-parameter families.

We can now describe our results in more detail. We prove (Theorem~\ref{thm:s2n+1}) that every metric $\gt$, except the round metric $\g_1$, is a locally rigid solution to the Yamabe problem. We also prove (Theorems~\ref{thm:s4n+3} and \ref{thm:s15}) that, for each of the two families $\hs$ and $\kt$, there exist corresponding decreasing sequences $\{t^{\h}_q\}$ and $\{t^{\k}_q\}$ in $]0,1]$ that start at $t_0^{\h}=t_0^{\k}=1$ and converge to $0$ as $q\to+\infty$, such that
\begin{itemize}
\item[(i)] $\hs$ (respectively $\kt$) is a bifurcation point if $t=t_q^{\h}$ (respectively $t=t_q^{\k}$) for some $q$;
\item[(ii)] $\hs$ (respectively $\kt$) is locally rigid if $t\not\in\{t_q^{\h}\}$ (respectively $t\not\in\{t_q^{\k}\}$).
\end{itemize}
This means that $\{\gt,t\neq1\}$ are locally the only critical points of the Hilbert-Einstein functional on their conformal classes (hence locally the only solutions to the Yamabe problem); while for $\{\hs,t>0\}$ and $\{\kt,t>0\}$ there are infinitely many bifurcating branches of critical points issuing from the homogeneous branch at $\h_{t^{\h}_q}$ and $\k_{t^{\k}_q}$ (hence infinitely many other solutions to the Yamabe problem that accumulate on homogeneous solutions). Moreover, there is a \emph{break of symmetry} at all these bifurcation values, i.e., the bifurcating branches consist of non-homogeneous metrics. Such different behaviors of $\hs$ and $\kt$ compared to $\gt$ are in part explained by the fact that the scalar curvature function $\scal(\gt)$ is bounded from above, while $\scal(\hs)$ and $\scal(\kt)$ blow up as $t\to 0$.

The round metric of $S^m$ belongs to all three families $\gt,\hs$ and $\kt$, corresponding to $t=1$. In the conformal class of this metric, it was already well-known that the critical points of the Hilbert-Einstein functional form a manifold of dimension $m+1$. This implies that the round metric is a \emph{trivial} bifurcation point in all three families. This follows from the fact that the round sphere is the unique compact Riemannian manifold to admit non-isometric conformal transformations (see Obata~\cite{Oba72}) and given any such transformation $f\colon S^m\to S^m$, the pull-back of the round metric by $f$ has constant scalar curvature (see Schoen~\cite[Sec 2]{Schoen87}). In this way, to each non-isometric conformal transformation of $S^m$ corresponds a solution to the Yamabe problem that is conformal to the round metric.

Our results can also be understood from the viewpoint of dynamical systems, where \emph{bifurcation} means a topological or qualitative change in the structure of the set of fixed points of a $1$-parameter family of systems when varying this parameter. Critical points of the Hilbert-Einstein functional in a conformal class $[g]$ are fixed points of the so-called \emph{Yamabe flow}, the corresponding $L^2$-gradient flow of the Hilbert-Einstein functional, which gives a dynamical system in this conformal class. The bifurcation results above mentioned can be hence interpreted as a local change in the set of fixed points of the Yamabe flow near homogeneous metrics (which are always fixed points) when varying the conformal class $[g]$ with $g$ in one of the families $\gt,\hs$ and $\kt$. An interesting question would be to study the dynamics near these new fixed points for $t\in\{t_q^{\h}\}$ (respectively $t\in\{t_q^{\k}\}$).

We also obtain corollaries to our main results regarding \emph{global} uniqueness and multiplicity of solutions to the Yamabe problem in the conformal classes of $\gt,\hs$ and $\kt$. Namely, we prove that the set of $t>0$ such that there exists a \emph{unique} unit volume constant scalar curvature metric in the conformal class of $\gt$ is open (see Proposition~\ref{prop:stability}). The same is true for the other families $\hs$ and $\kt$. In the opposite direction, we have results on the multiplicity of solutions in the conformal classes of the families $\hs$ and $\kt$ that admit bifurcation. More precisely, we prove existence of \emph{at least $3$} different unit volume constant scalar curvature metrics in the conformal class of $\hs$ for $t$ in an infinite subset of $]0,1[$ with $0$ in its closure, see Proposition~\ref{prop:mult}. Again, the same holds for $\kt$.

The techniques to prove all of the above results are based on the variational characterization of solutions to the Yamabe problem as critical points of the Hilbert-Einstein functional on conformal classes. Most of the abstract results used appeared in the recent paper~\cite{lpz}. There are two such abstract results. The first (Proposition~\ref{prop:localrigidity}) gives a sufficient condition for local rigidity at a critical point using the Implicit Function Theorem. The second (Proposition~\ref{prop:bifmorseindex}) gives sufficient conditions for existence of a bifurcation, based on the classic bifurcation result that a change in the Morse index implies bifurcation, see \cite[Thm II.7.3]{kielhofer} or \cite[Thm 2.1]{smwas}.

Thus, in order to prove the results claimed, we have to analyze the second variation of the Hilbert-Einstein functional at every metric\footnote{We denote an abstract $1$-parameter family of metrics by $g_t$ (so that in the applications mentioned $g_t$ will be replaced by $\gt,\hs$ and $\kt$).} $g_t$ of the $1$-parameter families of homogeneous metrics on $S^m$. Using the second variation formula \eqref{eq:jacobiop}, this amounts to computing the spectrum of the Laplacian $\Delta_t$ of $g_t$ and the scalar curvature $\scal(g_t)$.

More precisely, a critical point $g_t$ is degenerate if and only if $\scal(g_t)\neq0$ and $\tfrac{\scal(g_t)}{m-1}$ is an eigenvalue of $\Delta_t$; and the Morse index of a critical point $g_t$ is the number of positive eigenvalues of $\Delta_t$ that are less than $\tfrac{\scal(g_t)}{m-1}$. Since the above $1$-parameter families $g_t$ of homogeneous metrics on $S^m$ are the obtained by scaling the fibers of Riemannian submersions with totally geodesic fibers, the spectrum of the Laplacian $\Delta_t$ of $g_t$ is well-understood (see~\cite{bbb,bb}). Roughly, it consists of linear combinations (that depend on $t$) of eigenvalues of the original metric on the total space, the round sphere, with eigenvalues of the fibers, which are also round spheres. Since the spectrum of round spheres is well-known, computing the spectrum of $\Delta_t$ is reduced to identifying which combinations of eigenvalues occur as eigenvalues of $\Delta_t$. This depends heavily on the global geometry of the submersion, but, in the cases studied, can be tackled using observations of Tanno~\cite{tanno,tanno2}. In particular, we explicitly compute the first positive eigenvalue $\lambda_1(t)$ of $g_t$. Combining this knowledge of the spectrum of $\Delta_t$ with the formula for the scalar curvature of $g_t$, we are able to identify all degeneracy values of each $1$-parameter family $g_t$, compute their the Morse index and prove existence of bifurcation at all degeneracy values $t\neq 1$ and local rigidity around $g_t$ for all other $t>0$.

We stress that although we only consider spheres, many of the theoretic aspects of our results are completely general to metrics $g_t$ obtained by scaling the fibers of any Riemannian submersion with totally geodesic fibers. With a few other assumptions, it is possible to obtain similar bifurcation results on other such families whose fiber has positive scalar curvature. An infinite sequence of bifurcation values $\{t_q\}$ converging to zero is obtained via changes of the Morse index when eigenvalues of the Laplacian on the base manifold (that are also eigenvalues of $\Delta_t$ for all $t>0$) become smaller than $\tfrac{\scal(g_t)}{m-1}$. Nevertheless, one has to deal with \emph{compensations} of eigenvalues crossing in the opposite direction with same multiplicity (preventing the Morse index from changing). A study of this compensation problem and applications are to appear in a forthcoming note by the authors.

This paper is organized as follows. In Section~\ref{sec:var}, we briefly recall the variational characterization of the Yamabe problem, and establish the basic notions of bifurcation and local rigidity, as well as the abstract bifurcation criteria that will be used in the applications. The Laplacians of a Riemannian submersion with totally geodesic fibers are studied in Section~\ref{sec:subm}. Section~\ref{sec:homsph} describes the classification of homogeneous metrics on spheres, obtained by Ziller~\cite{ziller}. In Sections~\ref{sec:s2n+1t}, \ref{sec:s4n+3t} and \ref{sec:s15t}, we prove the main results of the paper, on bifurcation and local rigidity of solutions to the Yamabe problem, for the families $(S^{2n+1},\gt)$, $(S^{4n+3},\hs)$ and $(S^{15},\kt)$ respectively. Finally, Section~\ref{sec:final} contains global uniqueness and multiplicity results for solutions of the Yamabe problem on the above families that follow as consequences of our main results.

\noindent
{\bf Acknowledgment.} It is our pleasure to thank Karsten Grove, Matthew Gursky, Brian Hall and Wolfgang Ziller for enlightening conversations and for bringing some references to our attention.

\section{Variational setup for the Yamabe problem}
\label{sec:var}

We start by briefly recalling the classic variational setup for the Yamabe problem, see~\cite{lpz,Schoen87} for details. Let $M$ be a closed manifold of dimension $m$. Henceforth we fix $k\geq 3$ and $\alpha\in\,]0,1[$. Consider the set $\Met(M)$ of $C^{k,\alpha}$ Riemannian metrics on $M$, which is an open convex cone in the Banach space of $C^{k,\alpha}$ symmetric $(0,2)$-tensors. For each $g\in\Met(M)$, define the $C^{k,\alpha}$ conformal class of $g$ as
\begin{equation*}
[g]:=\left\{\phi\,g:\phi\in C^{k,\alpha}(M),\phi>0\right\}.
\end{equation*}
Denote by $\Met_1(M)$ the smooth codimension $1$ embedded submanifold of $\Met(M)$ formed by unit volume metrics. Finally, let
\begin{equation*}
[g]_1:=\Met_1(M)\cap [g].
\end{equation*}
The set $[g]_1$ is a smooth codimension $1$ Banach submanifold of $[g]$, and its tangent space at the metric $g$ can be canonically identified as
\begin{equation}\label{eq:idtmet}
T_{g}[g]_1\cong\left\{\psi\in C^{k,\alpha}(M):\int_M\psi\; \vol_{g}=0\right\}.
\end{equation}

\subsection{Hilbert-Einsten functional}\label{subs:var}
The Hilbert-Einstein functional is defined by
\begin{equation}\label{eq:a}
\mathcal A(g):=\frac{1}{\Vol(g)}\int_M \scal(g)\; \vol_g.
\end{equation}

\begin{proposition}
The Hilbert-Einstein functional $\mathcal A$ is smooth on $[g]_1$, and a metric $g'\in [g]_1$ is a critical point of $\mathcal A|_{[g]_1}$ if and only if it has constant scalar curvature. If $g\in [g]_1$ is a critical point, the second variation can be identified with the quadratic form on \eqref{eq:idtmet} given by
\begin{equation}\label{eq:jacobiop}
\dd^2\big(\mathcal A|_{[g]_1}\big)(g)(\psi,\psi)=\tfrac{m-2}{2}\int_M\big((m-1)\Delta_{g}\psi-\scal(g)\psi\big)\psi\;\vol_{g}.
\end{equation}
\end{proposition}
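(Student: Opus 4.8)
The plan is to trivialize the problem by using the conformal parametrization that turns the Hilbert--Einstein functional into a quadratic form subject to a single nonlinear constraint. Write every metric conformal to $g$ as $g'=u^{4/(m-2)}g$ with $u\in C^{k,\alpha}(M)$, $u>0$, and recall the transformation law $\scal(g')=u^{-(m+2)/(m-2)}\,L_g u$, where $L_g=\tfrac{4(m-1)}{m-2}\Delta_g+\scal(g)$ is the conformal Laplacian (with $\Delta_g$ the nonnegative Laplacian), together with $\vol_{g'}=u^{2m/(m-2)}\,\vol_g$. Since the exponents satisfy $-\tfrac{m+2}{m-2}+\tfrac{2m}{m-2}=1$, these combine to give $\int_M\scal(g')\,\vol_{g'}=\int_M u\,L_g u\,\vol_g=\tfrac{4(m-1)}{m-2}\int_M|\nabla u|^2\,\vol_g+\int_M\scal(g)\,u^2\,\vol_g$, while $\Vol(g')=\int_M u^{p}\,\vol_g$ with $p=\tfrac{2m}{m-2}$. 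On $[g]_1$ the volume is $1$, so $\mathcal A$ reduces to the quadratic functional $F(u)=\int_M u\,L_g u\,\vol_g$ restricted to the constraint $\bigl\{v:G(u):=\int_M u^p\,\vol_g=1\bigr\}$, with the base metric $g$ corresponding to $u_0\equiv 1$.

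Smoothness is then immediate: $\scal(g')$ depends rationally on $g'$ and polynomially on its derivatives up to order two, so $g\mapsto\int_M\scal(g)\,\vol_g$ and $g\mapsto\Vol(g)$ are smooth on the Banach manifold $\Met(M)$ (here $k\geq 3$ guarantees the two derivatives live in $C^{k-2,\alpha}$ and all the algebraic operations and the integration are smooth between the relevant spaces), and $\Vol(g)>0$; hence $\mathcal A$ and its restriction to $[g]_1$ are smooth. For the first variation, I would compute $\dd(\mathcal A|_{[g]_1})$ along conformal variations $\dot g=\psi\,g$, using the general formula $\tfrac{d}{ds}\int_M\scal\,\vol=-\int_M\langle\Ric-\tfrac12\scal\,g,\dot g\rangle\,\vol$; contracting with $\psi\,g$ gives $\tfrac{m-2}{2}\int_M\psi\,\scal(g')\,\vol_{g'}$, the $1/\Vol$ factor contributing nothing since volume-preserving variations satisfy $\int_M\psi\,\vol_{g'}=0$. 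This vanishes for every $\psi$ orthogonal to the constants exactly when $\scal(g')$ is $L^2$-orthogonal to all such $\psi$, i.e.\ when $\scal(g')$ is constant.

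For the second variation at a critical point $g$ (so that $\scal(g)$ is constant and $u_0\equiv1$), I would exploit that $F$ is quadratic and use the Lagrange-multiplier form of the constrained Hessian, $\mathrm{Hess}(F|_{\{G=1\}})(u_0)(v,v)=D^2F(u_0)(v,v)-\mu\,D^2G(u_0)(v,v)$, where $\mu$ is determined by $\dd F(u_0)=\mu\,\dd G(u_0)$ on the tangent space $T_{u_0}=\bigl\{v:\int_M v\,\vol_g=0\bigr\}$. Since $\nabla u_0=0$, one finds $\dd F(u_0)(w)=2\scal(g)\int_M w\,\vol_g$ and $\dd G(u_0)(w)=p\int_M w\,\vol_g$, hence $\mu=\tfrac{2\scal(g)}{p}=\tfrac{(m-2)\scal(g)}{m}$; moreover $D^2F(u_0)(v,v)=\tfrac{8(m-1)}{m-2}\int|\nabla v|^2+2\scal(g)\int v^2$ and $D^2G(u_0)(v,v)=p(p-1)\int v^2$. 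Substituting and collecting the $\int v^2$ terms (the coefficient becomes $2\scal(g)-\tfrac{2(m+2)}{m-2}\scal(g)=-\tfrac{8}{m-2}\scal(g)$) yields $\mathrm{Hess}(F|_{[g]_1})(v,v)=\tfrac{8}{m-2}\int_M\big((m-1)\Delta_g v-\scal(g)\,v\big)v\,\vol_g$. Finally I would translate from the variable $v=\dot u$ to the variable $\psi$ of \eqref{eq:idtmet}: differentiating $u^{4/(m-2)}$ at $u_0=1$ gives $\psi=\tfrac{4}{m-2}v$, so the quadratic form rescales by $\big(\tfrac{m-2}{4}\big)^2$, and $\tfrac{8}{m-2}\cdot\tfrac{(m-2)^2}{16}=\tfrac{m-2}{2}$, reproducing \eqref{eq:jacobiop}.

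The routine parts are smoothness and the first variation; the delicate point is the second variation, where two things must be handled with care. First, the term $-\mu\,D^2G(u_0)$ coming from the nonlinearity of the unit-volume constraint is \emph{not} negligible: it is precisely what converts the naive quadratic form $\tfrac{8(m-1)}{m-2}\int|\nabla v|^2+2\scal(g)\int v^2$ into the correct one with the sign-reversed zeroth-order term, so omitting the Lagrange-multiplier contribution would give the wrong answer. Second, the various conformal weights and the change of variables $\psi=\tfrac{4}{m-2}v$ must be tracked exactly in order to land on the constant $\tfrac{m-2}{2}$; this constant bookkeeping, rather than any conceptual difficulty, is where the real work lies.
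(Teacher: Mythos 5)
Your proposal is correct, and in fact the paper offers no proof of this proposition at all: it is recalled as classical, with the reader referred to \cite{lpz,Schoen87}, so there is nothing in the paper to diverge from. Your argument — parametrizing $[g]_1$ by the conformal factor $u^{4/(m-2)}$, reducing $\mathcal A$ to the quadratic Yamabe functional $F(u)=\int_M u\,L_g u\,\vol_g$ on the constraint $\int_M u^{2m/(m-2)}\vol_g=1$, and computing the constrained Hessian $D^2F-\mu\,D^2G$ at $u_0\equiv1$ — is precisely the standard derivation carried out in those references. The bookkeeping checks out: $\mu=\tfrac{(m-2)\scal(g)}{m}$, the multiplier term converts the zeroth-order coefficient to $-\tfrac{8}{m-2}\scal(g)$, and the change of variable $\psi=\tfrac{4}{m-2}v$ (forced by the paper's identification \eqref{eq:idtmet}, where tangent vectors are $\dot g=\psi g$ rather than $\dot u$) rescales $\tfrac{8}{m-2}$ to exactly $\tfrac{m-2}{2}$, reproducing \eqref{eq:jacobiop} with the sign conventions (nonnegative Laplacian) that the paper's later spectral analysis requires.
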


\begin{remark}\label{rem:renorm}
Note that, given $\alpha>0$, one has $\Delta_{\alpha g}=\frac1\alpha\Delta_g$ and $\scal(\alpha g)=\frac1\alpha\scal(g)$. Hence, the spectrum of $\Delta_g-\frac{\scal(g)}{m-1}$ scales in a trivial way under homotheties, in the sense that negative (respectively positive) eigenvalues remain negative (respectively positive). On the other hand, $\vol_{\alpha g}=\alpha^\frac m2\vol_g$. Thus, whenever necessary, we may renormalize a metric to have unit volume without compromising the above spectral theory.
\end{remark}

\subsection{Bifurcation vs. local rigidity}
Consider a smooth path $g_t\in\Met_1(M)$ of constant scalar curvature Riemannian metrics on $M$.

\begin{definition}\label{def:biflocrig}
A value $t_*\in[a,b]$ is a \emph{bifurcation value} for the family $g_t$ if there exists a sequence $\{t_q\}$ in $[a,b]$ that converges to $t_*$ and a sequence $\{g_q\}$ in $\Met(M)$ of Riemannian metrics on $M$ that converges to $g_{t_*}$ satisfying for all $q\in\N$
\begin{itemize}
\item[(i)] $g_{t_q}\in [g_q]$, but $g_q\neq g_{t_q}$;
\item[(ii)] $\Vol(g_q)=\Vol(g_{t_q})$;
\item[(iii)] $\scal(g_q)$ is constant.
\end{itemize}

If $t_*\in[a,b]$ is not a bifurcation value, then the family $g_t$ is \emph{locally rigid} at $t_*$. In other words, the family $g_t$ is locally rigid at $t_*\in[a,b]$ if there exists an open neighborhood $U$ of $g_{t_*}$ in $\Met(M)$ such that the following holds. If $g\in U$ is another metric with $\scal(g)$ constant and there exists $t\in [a,b]$ with $g_t\in U$, $g\in [g_{t}]$ and $\Vol(g)=\Vol(g_{t})$, then $g=g_{t}$.
\end{definition}

A value $t_*\in[a,b]$ is a \emph{degeneracy value} for $g_t$ if $\scal({g_{t_*}})\neq0$ and $\tfrac{\scal({g_{t_*}})}{m-1}$ is an eigenvalue of the Laplacian $\Delta_{t_*}$ of $g_{t_*}$. From \eqref{eq:jacobiop}, this is equivalent to $g_{t_*}$ being a \emph{degenerate} critical point of \eqref{eq:a} restricted to $[g_{t_*}]_1$. The \emph{Morse index} $N(g_t)$ of $g_t$ is the number of positive eigenvalues of $\Delta_t$ (counted with multiplicity) that are less than $\tfrac{\scal({g_t})}{m-1}$.

The following sufficient condition for local rigidity of $g_t$ at $t_*$ is easily proved using the Implicit Function Theorem, see \cite[Prop 3.1]{lpz}.

\begin{proposition}\label{prop:localrigidity}
If $t_*$ is not a degeneracy value of $g_t$, then $g_t$ is locally rigid at $t_*$.
\end{proposition}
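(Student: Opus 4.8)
The plan is to characterize the constant scalar curvature metrics conformal to $g_t$ near $g_{t_*}$ as the zero set of a smooth map between Banach spaces, and then to use the Implicit Function Theorem to show that this zero set is, locally, exactly the homogeneous branch $t\mapsto g_t$. Since each $g_t$ is already such a solution, local rigidity will follow from the uniqueness part of the Implicit Function Theorem. The content of the hypothesis is precisely that the linearization of this map in the conformal direction --- which is the Jacobi operator appearing in the second variation formula \eqref{eq:jacobiop} --- is an isomorphism at $t_*$.

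More concretely, first I would parametrize the unit volume conformal class $[g_t]_1$ near $g_{t_*}$ by a neighborhood $\mathcal{O}$ of $0$ in the mean-zero space $V=\{\psi\in C^{k,\alpha}(M):\int_M\psi\,\vol_{g_{t_*}}=0\}\cong T_{g_{t_*}}[g_{t_*}]_1$, writing the corresponding metric as $g_t^\psi\in[g_t]_1$ with $g_t^0=g_t$. I would then define
\[
\Phi(t,\psi):=\scal(g_t^\psi)-\mathcal{A}(g_t^\psi),
\]
which, after composing with a fixed projection (legitimate since all metrics involved are $C^{k,\alpha}$-close to $g_{t_*}$), lands in a fixed mean-zero target space $W\subset C^{k-2,\alpha}(M)$. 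By construction $\Phi(t,\psi)=0$ if and only if $g_t^\psi$ has constant scalar curvature, i.e. is a critical point of $\mathcal{A}|_{[g_t]_1}$; and $\Phi(t,0)=0$ for every $t$, because each $g_t$ already has constant scalar curvature. The crucial identification is that $D_\psi\Phi(t_*,0)$ is, up to the nonzero factor $\tfrac{m-2}{2}$ and the projection onto mean-zero functions, the Jacobi operator $J_{t_*}:=(m-1)\Delta_{t_*}-\scal(g_{t_*})$ governing the second variation \eqref{eq:jacobiop}.

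The hard part will be verifying that $J_{t_*}$ is an isomorphism on the mean-zero space $V$. First I would observe that $J_{t_*}$ is a formally self-adjoint elliptic operator of order two, so that $J_{t_*}\colon C^{k,\alpha}(M)\to C^{k-2,\alpha}(M)$ is Fredholm of index zero, and that $J_{t_*}$ preserves the mean-zero subspace since $\int_M\Delta_{t_*}\psi\,\vol_{g_{t_*}}=0$ and $\int_M\psi\,\vol_{g_{t_*}}=0$. It then suffices to show $\ker\big(J_{t_*}|_V\big)=\{0\}$. The hypothesis that $t_*$ is not a degeneracy value enters here in two complementary ways: if $\scal(g_{t_*})=0$ then $J_{t_*}=(m-1)\Delta_{t_*}$ has kernel equal to the constants, which meets $V$ only at $0$; while if $\scal(g_{t_*})\neq0$, then $t_*$ not being a degeneracy value means precisely that $\tfrac{\scal(g_{t_*})}{m-1}$ is not an eigenvalue of $\Delta_{t_*}$, so that $J_{t_*}$ has trivial kernel even on all of $C^{k,\alpha}(M)$. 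In either case $J_{t_*}|_V$ is injective, hence an isomorphism of $V$ onto its $C^{k-2,\alpha}$ counterpart by the Fredholm alternative.

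Finally, with $D_\psi\Phi(t_*,0)$ an isomorphism, the Implicit Function Theorem yields a neighborhood of $(t_*,0)$ in which the solutions of $\Phi(t,\psi)=0$ form the graph of a unique smooth map $t\mapsto\psi(t)$ with $\psi(t_*)=0$; since $\psi\equiv0$ is already a solution by $\Phi(t,0)=0$, uniqueness forces $\psi(t)=0$ for all $t$ near $t_*$. I would then translate this back into Definition~\ref{def:biflocrig}: choosing $U$ to be a sufficiently small neighborhood of $g_{t_*}$ in $\Met(M)$, any constant scalar curvature metric $g\in U$ conformal to some $g_t\in U$ with $\Vol(g)=\Vol(g_t)$ must, by continuity of $t\mapsto g_t$, correspond to a parameter $t$ near $t_*$ and a small $\psi\in\mathcal{O}$ with $\Phi(t,\psi)=0$ (here $\Vol(g)=\Vol(g_t)=1$ places $g$ in $[g_t]_1$, as in Remark~\ref{rem:renorm}); the uniqueness above then gives $g=g_t^0=g_t$, which is exactly local rigidity at $t_*$.
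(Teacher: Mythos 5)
Your proposal is correct and follows exactly the route the paper intends: the paper does not spell out the argument but states that Proposition~\ref{prop:localrigidity} ``is easily proved using the Implicit Function Theorem'' (deferring to \cite[Prop 3.1]{lpz}), and your construction --- realizing constant scalar curvature metrics near $g_{t_*}$ as zeros of a map whose $\psi$-linearization is the Jacobi operator $(m-1)\Delta_{t_*}-\scal(g_{t_*})$, invertible on mean-zero functions precisely by non-degeneracy, then invoking the uniqueness part of the IFT --- is that same argument, carried out with the standard technical points (fixed target space via projection, the two cases $\scal(g_{t_*})=0$ and $\scal(g_{t_*})\neq0$, Fredholm index zero) handled correctly. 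The only point worth noting is that in translating back to Definition~\ref{def:biflocrig} you need $g_t\in U$ to force $t$ near $t_*$, which requires slightly more than continuity of $t\mapsto g_t$ (e.g.\ injectivity of the path plus compactness of $[a,b]$, which holds for the families considered here).
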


\begin{corollary}\label{cor:indzero}
Suppose that, in addition to the hypotheses of Proposition~\ref{prop:localrigidity}, there exists a value $t_*$ for which $\frac{\scal(g_{t_*})}{m-1}$ is less than the first positive eigenvalue $\lambda_1(t_*)$ of $\Delta_{t_*}$. Then $g_{t_*}$ is a \emph{strict local minimum} for the Hilbert-Einstein functional in its conformal class, in particular $g_t$ is locally rigid at $t_*$.
\end{corollary}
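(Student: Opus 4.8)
The plan is to show that the quadratic form \eqref{eq:jacobiop} at $g=g_{t_*}$ is positive definite on the tangent space $T_{g_{t_*}}[g_{t_*}]_1$; positive definiteness of the Hessian at a critical point then yields that $g_{t_*}$ is a strict local minimum, while the local rigidity assertion follows at once from Proposition~\ref{prop:localrigidity}, whose hypotheses are among those assumed. First I would use the identification \eqref{eq:idtmet} to represent an arbitrary tangent vector by a function $\psi\in C^{k,\alpha}(M)$ with $\int_M\psi\,\vol_{t_*}=0$. Since $m\geq 3$, the prefactor $\tfrac{m-2}{2}$ is strictly positive, so it suffices to prove that
\[
\int_M\big((m-1)\Delta_{t_*}\psi-\scal(g_{t_*})\psi\big)\psi\;\vol_{t_*}>0
\]
for every such $\psi\neq0$.

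The key step is a spectral estimate. The mean-zero condition says precisely that $\psi$ is $L^2$-orthogonal to $\ker\Delta_{t_*}$, which is spanned by the constants (the eigenspace for the eigenvalue $0$). Hence, by the variational characterization of the first positive eigenvalue $\lambda_1(t_*)$,
\[
\int_M(\Delta_{t_*}\psi)\,\psi\;\vol_{t_*}\geq\lambda_1(t_*)\int_M\psi^2\;\vol_{t_*}.
\]
Substituting this into \eqref{eq:jacobiop} and using the hypothesis $\scal(g_{t_*})<(m-1)\lambda_1(t_*)$ gives
\[
\dd^2\big(\mathcal A|_{[g_{t_*}]_1}\big)(g_{t_*})(\psi,\psi)\geq\tfrac{m-2}{2}\big((m-1)\lambda_1(t_*)-\scal(g_{t_*})\big)\int_M\psi^2\;\vol_{t_*},
\]
which is strictly positive whenever $\psi\neq0$. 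Equivalently, in terms of the Morse index: since the smallest positive eigenvalue $\lambda_1(t_*)$ already exceeds $\tfrac{\scal(g_{t_*})}{m-1}$, no positive eigenvalue of $\Delta_{t_*}$ lies below $\tfrac{\scal(g_{t_*})}{m-1}$, so $N(g_{t_*})=0$.

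It remains to upgrade this positivity to a genuine strict local minimum. The bound above is coercive, controlling the Hessian from below by a positive multiple of $\int_M\psi^2\,\vol_{t_*}$; together with the non-degeneracy hypothesis, which makes the Jacobi operator $(m-1)\Delta_{t_*}-\scal(g_{t_*})$ an invertible, positive, self-adjoint elliptic operator on mean-zero functions, this places us in the setting where the Morse lemma applies to the smooth functional $\mathcal A|_{[g_{t_*}]_1}$ and produces a strict local minimum. Local rigidity at $t_*$ then follows directly from Proposition~\ref{prop:localrigidity}. The only point requiring care is precisely this last passage: in infinite dimensions, positive-definiteness of the Hessian as a quadratic form does not by itself force a strict local minimum, so one must exploit the coercivity of the lower bound together with the non-degeneracy to legitimately invoke the Morse lemma in the $C^{k,\alpha}$ topology.
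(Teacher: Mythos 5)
Your first two paragraphs are correct, and they reproduce exactly the argument the paper leaves implicit (the corollary is stated there with no proof, as an immediate consequence of \eqref{eq:jacobiop} and Proposition~\ref{prop:localrigidity}): the mean-zero condition in \eqref{eq:idtmet} says that $\psi$ is $L^2$-orthogonal to the constants, i.e.\ to $\ker\Delta_{t_*}$, so the Rayleigh quotient gives $\int_M(\Delta_{t_*}\psi)\psi\,\vol_{g_{t_*}}\geq\lambda_1(t_*)\int_M\psi^2\,\vol_{g_{t_*}}$, whence \eqref{eq:jacobiop} is bounded below by $\tfrac{m-2}{2}\bigl((m-1)\lambda_1(t_*)-\scal(g_{t_*})\bigr)\int_M\psi^2\,\vol_{g_{t_*}}$ and $N(g_{t_*})=0$. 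Note, moreover, that the spectral hypothesis alone already forces $t_*$ to be a non-degeneracy value (a number strictly below $\lambda_1(t_*)$ is an eigenvalue of $\Delta_{t_*}$ only if it equals $0$, and degeneracy requires $\scal(g_{t_*})\neq0$), so the local rigidity assertion follows from Proposition~\ref{prop:localrigidity} exactly as you say.

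The genuine gap is your final step, and the caveat you add does not repair it. The Morse--Palais lemma requires \emph{strong} non-degeneracy: the Hessian must induce an isomorphism of the tangent space onto its dual, equivalently the quadratic form must be equivalent to the square of a complete Hilbert norm on that space. That is precisely what fails here: $(m-1)\Delta_{t_*}-\scal(g_{t_*})$ is invertible only as an operator from mean-zero $C^{k,\alpha}$ functions onto mean-zero $C^{k-2,\alpha}$ functions, and your lower bound is coercive only for the $L^2$ norm, which does not dominate the $C^{k,\alpha}$ norm. In a Banach space this is genuinely insufficient: the functional $F(u)=\int_{S^1}\bigl(u^2-(u')^4\bigr)$ has at $u=0$ a critical point whose Hessian $2\int_{S^1}u^2$ is positive definite and $L^2$-coercive, yet $u=0$ is not a local minimum in the $C^1$ topology (test $u_n=\tfrac{\delta}{n}\sin(nx)$ with $n\to\infty$), because the higher-order remainder involves derivatives and cannot be absorbed by an $L^2$ bound. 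What rescues the corollary is special structure of this functional, not a general lemma: writing conformal metrics as $g'=u^{4/(m-2)}g_{t_*}$, the total scalar curvature $\int_M\scal(g')\,\vol_{g'}=\int_M\bigl(\tfrac{4(m-1)}{m-2}|\nabla u|^2+\scal(g_{t_*})u^2\bigr)\vol_{g_{t_*}}$ is \emph{exactly} quadratic in $u$, while the unit-volume constraint $\int_M u^{2m/(m-2)}\vol_{g_{t_*}}=1$ contains no derivatives of $u$; expanding around $u\equiv1$, the entire non-quadratic remainder is therefore of order $\|u-1\|_{C^0}\|u-1\|_{L^2}^2$, which \emph{is} absorbed by your coercive quadratic lower bound, and the strict local minimality follows (in fact in a $C^0$-neighborhood). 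Without an argument of this kind, or a reduction to a genuine Hilbert-space setting where the Hessian is strongly non-degenerate, the minimality claim remains unproved, although the local rigidity part of the statement stands on Proposition~\ref{prop:localrigidity} alone.
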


Thus, a necessary condition for bifurcation at $t_*$ is that $t_*$ be a degeneracy value. However, this is generally not sufficient. From classic bifurcation theory, a \emph{sufficient} condition can be given in terms of a change in the Morse index $N(g_t)$ when passing a degeneracy value $t_*$, see \cite[Thm II.7.3]{kielhofer} or \cite[Thm 2.1]{smwas}. This provides our main tool in detecting bifurcation:

\begin{proposition}\label{prop:bifmorseindex}
Assume $a$ and $b$ are not degeneracy values for $g_t$ and $N(g_a)\neq N(g_b)$. Then there exists a bifurcation value $t_*\in\,]a,b[$ for the family $g_t$.
\end{proposition}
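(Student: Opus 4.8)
The plan is to realize the search for constant scalar curvature metrics in the conformal classes $[g_t]_1$ as an abstract variational bifurcation problem along the trivial branch $t\mapsto g_t$, and then to invoke the classical principle that a jump in the Morse index across a family of non-degenerate critical points forces bifurcation, as in \cite[Thm II.7.3]{kielhofer}, \cite[Thm 2.1]{smwas}.

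First I would fix the functional-analytic setting inherited from Section~\ref{sec:var}. Each $g_t$ has constant scalar curvature, hence is a critical point of $\mathcal A|_{[g_t]_1}$, so the path $g_t$ plays the role of the trivial branch of solutions. Under the identification \eqref{eq:idtmet} of $T_{g_t}[g_t]_1$ with the zero-average functions, the second variation \eqref{eq:jacobiop} at $g_t$ is represented, up to the positive factor $\tfrac{m-2}{2}$, by the self-adjoint elliptic operator $J_t := (m-1)\Delta_t - \scal(g_t)$ acting on $\{\psi : \int_M \psi\,\vol_{g_t} = 0\}$. Being second-order elliptic, $J_t$ is Fredholm of index zero with finite-dimensional negative eigenspace; since an eigenvalue $\mu$ of $\Delta_t$ contributes a negative eigenvalue $(m-1)\mu - \scal(g_t)$ of $J_t$ precisely when $0 < \mu < \tfrac{\scal(g_t)}{m-1}$ (the zero-average constraint discarding the constant eigenfunction $\mu = 0$), the dimension of this negative eigenspace is exactly the Morse index $N(g_t)$.

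Next I would check the two hypotheses of the abstract theorem. The assumption that $a$ and $b$ are not degeneracy values says precisely that $\tfrac{\scal(g_a)}{m-1}$ and $\tfrac{\scal(g_b)}{m-1}$ are not eigenvalues of $\Delta_a$ and $\Delta_b$; equivalently $J_a$ and $J_b$ are invertible, so $g_a$ and $g_b$ are non-degenerate critical points (the same computation underlying Proposition~\ref{prop:localrigidity}). The assumption $N(g_a) \neq N(g_b)$ then says the dimension of the negative eigenspace of $J_t$ differs at the endpoints, i.e.\ the spectral flow of the family $\{J_t\}_{t\in[a,b]}$ is non-zero.

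With these ingredients in place, the conclusion follows from the variational bifurcation theorem of \cite[Thm II.7.3]{kielhofer} (equivalently \cite[Thm 2.1]{smwas}): a change in the Morse index along a branch of non-degenerate critical points of a functional with Fredholm Hessian yields an instant $t_* \in\,]a,b[$ at which solutions off the trivial branch accumulate onto $g_{t_*}$. Translating this into Definition~\ref{def:biflocrig} --- the accumulating solutions being constant scalar curvature metrics $g_q \to g_{t_*}$, $g_q \neq g_{t_q}$, with $g_q \in [g_{t_q}]_1$ and matching volumes --- exhibits $t_*$ as a bifurcation value. I expect the main obstacle to lie not in the bifurcation theorem itself but in the preliminary reduction: one must verify that finding critical points of $\mathcal A|_{[g_t]_1}$ is genuinely an abstract problem to which \cite{kielhofer} applies --- straightening the trivial branch, realizing the functional on a fixed Hilbert space, and confirming that its Hessian is Fredholm and depends continuously on $t$ --- which is the technical content supplied by \cite{lpz}.
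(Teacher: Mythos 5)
Your proposal is correct and follows essentially the same route as the paper: the paper's proof of Proposition~\ref{prop:bifmorseindex} is simply a citation of \cite[Thm 3.3]{lpz}, and that theorem carries out precisely the reduction you outline --- straightening the trivial branch $t\mapsto g_t$, realizing the Hessian $(m-1)\Delta_t-\scal(g_t)$ as a Fredholm operator on the zero-average functions, identifying its negative eigenspace dimension with $N(g_t)$, and applying the Morse-index-jump bifurcation theorem of \cite{kielhofer,smwas}. You also correctly locate where the genuine technical work lies (the fixed-space formulation and continuity in $t$), which is exactly the content delegated to \cite{lpz}.
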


\begin{proof}
See  \cite[Thm 3.3]{lpz}.
\end{proof}

\section{Spectrum of Riemannian submersions with totally geodesic fibers}
\label{sec:subm}

Great part of the proofs of our results on homogeneous spheres are based in general facts for Riemannian submersions with totally geodesic fibers. In this section, we recall abstract tools to study their spectrum. Let $\pi\colon (M,g)\to (B,h)$ be a smooth Riemannian submersion with (connected) totally geodesic fibers $(F,k)$.

\subsection{Laplacians}
Let $\Delta_M$ be the Laplacian of $(M,g)$, acting on $C^\infty(M)$. Then $\Delta_M$, as a densely defined operator in $L^2(M,\vol_g)$, is symmetric (hence closable) and non-negative. Furthermore, it is well-known that $\Delta_M$ is essentially self-adjoint with this domain. We denote its unique self-adjoint extension also by $\Delta_M$. Analogously, let $\Delta_F$ be the (unique self-adjoint extension of the) Laplacian of the fiber $(F,k)$.

\begin{definition}\label{def:horverlapl}
Define the \emph{vertical Laplacian} $\Delta_v$ acting on $L^2(M,\vol_g)$ by
\begin{equation*}
(\Delta_v \psi)(p):=(\Delta_F \psi|_{F_p})(p),
\end{equation*}
and the \emph{horizontal Laplacian} acting on the same space by the difference $\Delta_h=\Delta_M-\Delta_v$.
\end{definition}

Both $\Delta_h$ and $\Delta_v$ are non-negative self-adjoint unbounded operators on the Hilbert space $L^2(M,\vol_g)$, but are in general not elliptic (unless $\pi$ is a covering). We now consider the spectra of such operators. It is well-known that $\Delta_M$ is non-negative and has compact resolvent, hence its spectrum is non-negative and discrete. Since the fibers are isometric, $\Delta_v$ also has discrete spectrum equal to that of the fibers. Let us denote these spectra by
\begin{equation}
\begin{aligned}\label{spectra}
\sigma(\Delta_M) &= \{0=\mu_0<\mu_1<\cdots<\mu_k\nearrow +\infty\}\\
\sigma(\Delta_v) &= \{0=\phi_0<\phi_1<\cdots<\phi_j\nearrow +\infty\}.
\end{aligned}
\end{equation}
We stress that the multiplicity of the eigenvalues of $\Delta_M$ is always \emph{finite}, however the eigenvalues of $\Delta_v$ might have \emph{infinite} multiplicity. Namely, $\Delta_v \psi=0$ only implies that $\psi$ is constant \emph{along the fibers}, i.e., $\psi=\overline \psi\circ\pi$ for some function $\overline \psi$ on the base. We also remark that the spectrum of $\Delta_h$ \emph{need not be discrete} (see \cite[Example 3.4]{bbb}), and it contains but does not coincide with the spectrum of the base $B$.

Having totally geodesic fibers guarantees the following key property of the operators $\Delta_M$ and $\Delta_v$ of a submersion, see~\cite[Thm 3.6]{bbb}.

\begin{theorem}\label{thm:bbb}
The Hilbert space $L^2(M,\vol_g)$ admits a Hilbert basis consisting of simultaneous eigenfunctions of $\Delta_M$ and $\Delta_v$.
\end{theorem}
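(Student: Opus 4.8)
The plan is to deduce the statement from the commutativity of $\Delta_M$ and $\Delta_v$, exploiting that $\Delta_M$ has compact resolvent. First I would isolate the abstract mechanism. Since $\Delta_M$ is non-negative, self-adjoint and has compact resolvent, the Hilbert space decomposes as an orthogonal Hilbert sum $L^2(M,\vol_g)=\bigoplus_i E_{\mu_i}$ of the finite-dimensional eigenspaces $E_{\mu_i}=\ker(\Delta_M-\mu_i)$, with $\mu_i$ as in \eqref{spectra}; by elliptic regularity, each $E_{\mu_i}\subset C^\infty(M)$. The whole theorem follows once one knows that $\Delta_v$ preserves each $E_{\mu_i}$: indeed, the restriction $\Delta_v|_{E_{\mu_i}}$ is then a symmetric endomorphism of a finite-dimensional inner-product space, hence admits an orthonormal basis of eigenfunctions, each of which is automatically an eigenfunction of $\Delta_M$ with eigenvalue $\mu_i$. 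Taking the union of these bases over all $i$ produces the desired Hilbert basis of simultaneous eigenfunctions.

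Thus the problem reduces to showing that $\Delta_v$ maps $E_{\mu_i}$ into itself. Because $E_{\mu_i}$ consists of smooth functions and the fiberwise operator $\Delta_v$ sends $C^\infty(M)$ to $C^\infty(M)$, it is enough to establish the operator identity $[\Delta_M,\Delta_v]=0$ on $C^\infty(M)$; equivalently, writing $\Delta_M=\Delta_h+\Delta_v$, it suffices to prove $[\Delta_h,\Delta_v]=0$. Granting this, if $\Delta_M f=\mu_i f$ then $\Delta_M(\Delta_v f)=\Delta_v(\Delta_M f)=\mu_i\,\Delta_v f$, so that $\Delta_v f\in E_{\mu_i}$, as required.

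The geometric core is therefore the commutator identity $[\Delta_h,\Delta_v]=0$, and this is exactly where the hypothesis of \emph{totally geodesic} fibers enters. I would compute in adapted local orthonormal frames, taking a vertical frame $\{V_a\}$ tangent to the fibers together with a basic horizontal frame $\{X_i\}$ obtained as horizontal lifts of a local frame on $B$. Two structural facts drive the computation: the vertical distribution is integrable, so $[V_a,V_b]$ is vertical; and the flow of a basic field carries fibers to fibers, so $[X_i,V_a]$ is again vertical for any Riemannian submersion. The totally geodesic condition translates into the vanishing of the O'Neill tensor $T$, which says precisely that $\nabla_{V_a}V_b$ has no horizontal component, so that the fiberwise Levi-Civita connection agrees with the ambient one in the vertical directions. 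Feeding these relations into the local expressions $\Delta_v=-\sum_a\big(V_a^2-\nabla_{V_a}V_a\big)$ and the analogous expression for $\Delta_h$, the cross terms reorganize so that the two operators commute on smooth functions.

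The hard part will be this last tensorial computation: one must control the horizontal and vertical components of the various iterated covariant derivatives and Lie brackets, and verify that every term in $[\Delta_h,\Delta_v]f$ cancels once $T=0$ is imposed. The O'Neill $A$-tensor, measuring the failure of the horizontal distribution to be integrable, contributes terms that must be shown to cancel \emph{in pairs} rather than to vanish individually, which is the most delicate point. This bookkeeping is precisely the content of \cite[Thm 3.6]{bbb}, on which we rely; the remaining steps above are then formal consequences of the spectral theorem applied to $\Delta_M$, using that it has compact resolvent with finite-dimensional eigenspaces.
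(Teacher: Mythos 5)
Your proof is correct, but it takes a genuinely different route from the paper's. The paper establishes Theorem~\ref{thm:bbb} by citing \cite[Thm 3.6]{bbb} and then, in Remark~\ref{rem:comm}, repairing a gap in that reference: since $\Delta_M$ and $\Delta_v$ are unbounded self-adjoint operators, their commutativity on the dense subspace $C^\infty(M)$ does not by itself imply simultaneous diagonalizability (Nelson's counterexamples), so the paper upgrades weak commutativity to \emph{strong} commutativity --- commutation of all spectral projections --- via Nelson's commutation theorem \cite[Cor 9.2]{nelson}, using that $\Delta_M^2+\Delta_v^2$ is elliptic on a compact manifold and hence essentially self-adjoint on $C^\infty(M)$ (alternatively via \cite[Prop 2]{schmudgen}). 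Your argument sidesteps the strong-commutativity issue entirely: you use only the identity $[\Delta_M,\Delta_v]=0$ on $C^\infty(M)$ (which is precisely what \cite{bbb} actually proves), together with the compact resolvent of $\Delta_M$ and elliptic regularity. Since each eigenspace $E_{\mu_i}$ is finite-dimensional, consists of smooth functions, and is invariant under $\Delta_v$ by the pointwise commutation identity, everything reduces to diagonalizing a symmetric endomorphism of a finite-dimensional inner-product space, and the Hilbert sum $L^2(M,\vol_g)=\bigoplus_i E_{\mu_i}$ assembles the resulting orthonormal bases into a Hilbert basis. This is a legitimate and more elementary fix of exactly the gap the paper flags: it trades the operator-theoretic machinery for the compactness of the resolvent of $\Delta_M$, which is available here because $M$ is closed. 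What the paper's route buys in exchange is the stronger conclusion that the spectral measures of $\Delta_M$ and $\Delta_v$ commute, a fact it reuses in the proof of Corollary~\ref{cor:specincl} (simultaneous representation as multiplication operators); with your argument, that corollary would instead be deduced directly from the basis of simultaneous eigenfunctions, noting that the spectrum of a self-adjoint operator admitting such a basis is the closure of its set of eigenvalues. Both proofs defer the geometric core --- that totally geodesic fibers force $[\Delta_h,\Delta_v]=0$ on smooth functions --- to \cite{bbb}.
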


\begin{remark}\label{rem:comm}
The proof of the above result in~\cite[Thm 3.6]{bbb} is slightly incomplete. Namely, the basis of simultaneous eigenfunctions is claimed to be obtained from \emph{commutativity} of $\Delta_M$ and $\Delta_v$. However, it is only proven that the unbounded self-adjoint operators $\Delta_M$ and $\Delta_v$ commute \emph{on the dense subspace} of smooth functions on $M$, which is not sufficient\footnote{In fact, counter-examples to this implication were given by Nelson, see~\cite[Chap VIII]{rs}.} to draw the above conclusion. The correct notion of \emph{commutativity} for such operators in order to obtain this conclusion is in terms of commutativity of all of their projections in the associated projection-valued measures. This strong commutativity can be proved using~\cite[Cor 9.2]{nelson} with $A=\Delta_M$, $B=\Delta_v$ and $\mathfrak D=C^\infty(M)$. We note that essential self-adjointness of $A^2+B^2$ on $\mathfrak D$ follows from the fact that this is an elliptic operator with smooth coefficients in a compact manifold. An alternative proof of the desired strong commutativity can be given using~\cite[Prop 2]{schmudgen} with $A=B_1=\Delta_M$, $B_2=\Delta_v$ and $\mathcal D_1=\mathcal D_2=\mathcal D_{12}=C^\infty(M)$.
\end{remark}

A thorough description of the spectrum of Riemannian submersions with totally geodesic fibers was later achieved in \cite{bb}, in terms of representations of the isometry group of the fibers.

\subsection{Scaling the fibers}
Let $F\to (M,g)\stackrel{\pi}{\to} B$ be a Riemannian submersion with totally geodesic fibers. A natural way of obtaining a $1$-parameter family of other such submersions is scaling the original metric in the direction of the fibers. Namely, consider $g_t\in\Met(M)$ given by
\begin{equation}\label{eq:gt-geral}
g_t(v,w):=\begin{cases} t^2g(v,w), & v,w \mbox{ vertical}\\ 0, & v \mbox{ vertical}, \; w \mbox{ horizontal }\\ g(v,w), & v,w \mbox{ horizontal.}\\ \end{cases}
\end{equation}
Then $F\to (M,g_t)\stackrel{\pi}{\to} B$, $t>0$, is a Riemannian submersion with totally geodesic fibers, isometric to $(F,t^2k)$, where $(F,k)$ is the original fiber of $\pi\colon M\to B$. This construction is sometimes referred to as \emph{canonical variation}. Note that for $a\neq b$, the metrics $g_a$ and $g_b$ are \emph{not} conformal.

\begin{proposition}
Let $\Delta_t$ be the Laplacian of $(M,g_t)$. Then
\begin{equation}\label{eq:laplacians}
\Delta_t=\Delta_h+\tfrac{1}{t^2}\Delta_v=\Delta_M+\left(\tfrac{1}{t^2}-1\right)\Delta_v.
\end{equation}
\end{proposition}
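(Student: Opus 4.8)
The plan is to compute both Laplacians in a local orthonormal frame adapted to the submersion and to track the horizontal and vertical contributions separately. First I would fix a point and choose a local orthonormal frame $X_1,\dots,X_b$ of \emph{basic} horizontal fields (the horizontal lifts of a local orthonormal frame on the base $B$) together with a local orthonormal frame $V_1,\dots,V_f$ of vertical fields for $g$. Since $g_t$ agrees with $g$ on horizontal vectors and scales vertical vectors by $t^2$, the fields $X_1,\dots,X_b,\,t^{-1}V_1,\dots,t^{-1}V_f$ form a $g_t$-orthonormal frame. Writing the (non-negative) Laplacian of any metric in an orthonormal frame as $\Delta f=-\sum_i\big(e_i e_i f-(\nabla_{e_i}e_i)f\big)$, and using that $t$ is constant, one gets
\begin{equation*}
\Delta_t f=-\sum_{i=1}^b\big(X_iX_if-(\nabla^t_{X_i}X_i)f\big)-\frac{1}{t^2}\sum_{a=1}^f\big(V_aV_af-(\nabla^t_{V_a}V_a)f\big),
\end{equation*}
where $\nabla^t$ denotes the Levi-Civita connection of $g_t$. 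It remains to identify the two sums with $\Delta_h$ and $\tfrac1{t^2}\Delta_v$ respectively.

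For the vertical sum I would use that the fibers of $(M,g_t)$ are still totally geodesic and isometric to $(F,t^2k)$. Being totally geodesic forces $\nabla^t_{V_a}V_a$ to be vertical and to coincide with the intrinsic fiber covariant derivative, which is itself unchanged under the homothety $k\mapsto t^2k$; hence $\nabla^t_{V_a}V_a=\nabla_{V_a}V_a$ and the vertical sum computes the fiberwise Laplacian. Equivalently, by Remark~\ref{rem:renorm} the Laplacian of $(F,t^2k)$ is $\tfrac1{t^2}$ times that of $(F,k)$, so the vertical sum equals $\tfrac1{t^2}\Delta_v f$ directly from Definition~\ref{def:horverlapl}. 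Setting $t=1$ in the frame formula above also identifies the horizontal sum at $t=1$, namely $-\sum_i\big(X_iX_if-(\nabla_{X_i}X_i)f\big)=\Delta_M f-\Delta_v f=\Delta_h f$.

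The heart of the argument — and the step I expect to be the main obstacle — is showing that the horizontal sum is independent of $t$, i.e.\ that $\nabla^t_{X_i}X_i=\nabla_{X_i}X_i$ for basic horizontal fields. I would split $\nabla^t_{X_i}X_i$ into its horizontal and vertical parts (the horizontal/vertical splitting is the same for all $t$, since rescaling within the vertical distribution leaves its $g$-orthogonal complement fixed). The horizontal part is governed by the O'Neill formula: for basic horizontal $X,Y$, the component $\mathcal H(\nabla^t_XY)$ is the horizontal lift of $\nabla^B_{\pi_*X}\pi_*Y$, which depends only on the base metric $h$ and the horizontal restriction of the metric, both unchanged under the canonical variation; hence $\mathcal H(\nabla^t_XY)=\mathcal H(\nabla_XY)$. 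For the vertical part I would apply the Koszul formula to $2g_t(\nabla^t_XY,V)$ with $V$ vertical: the terms $Xg_t(Y,V)$ and $Yg_t(X,V)$ vanish by $g_t$-orthogonality of horizontal and vertical fields; the term $Vg_t(X,Y)=V\big(g(X,Y)\big)$ vanishes because $g(X,Y)$ is constant along fibers for basic $X,Y$; and $g_t([Y,V],X)$, $g_t([V,X],Y)$ vanish because $[X,V]$ is vertical for basic $X$ and vertical $V$. Only $g_t([X,Y],V)=t^2g([X,Y]^{\mathcal V},V)$ survives, while the left-hand side equals $2t^2g(\mathcal V(\nabla^t_XY),V)$; the factors $t^2$ cancel, giving $\mathcal V(\nabla^t_XY)=\tfrac12[X,Y]^{\mathcal V}=\mathcal V(\nabla_XY)$, the $t$-independent O'Neill tensor. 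Combining the two parts yields $\nabla^t_{X_i}X_i=\nabla_{X_i}X_i$, so the horizontal sum equals $\Delta_h f$, and therefore $\Delta_t=\Delta_h+\tfrac1{t^2}\Delta_v$. The second equality in \eqref{eq:laplacians} then follows from $\Delta_h=\Delta_M-\Delta_v$.
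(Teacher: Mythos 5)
Your argument is correct, but it is worth knowing that the paper itself contains no proof of this proposition: it simply defers to B\'erard Bergery--Bourguignon \cite{bbb}, Proposition~5.3. So what you have written is a self-contained substitute for a proof the paper outsources, and it holds up. The two places where the real content lives are exactly where you located them: (i) for the vertical sum, total geodesy of the fibers of $(M,g_t)$ lets you identify $\nabla^t_{V_a}V_a$ with the intrinsic fiber connection, which is unchanged under the homothety $k\mapsto t^2k$, so that sum is $\tfrac1{t^2}\Delta_v$; (ii) for the horizontal sum, your Koszul computation showing $\mathcal V(\nabla^t_XY)=\tfrac12[X,Y]^{\mathcal V}$ for basic horizontal $X,Y$ -- i.e.\ the O'Neill $A$-tensor does not feel the canonical variation -- together with the base-determined horizontal component, gives $\nabla^t_{X_i}X_i=\nabla_{X_i}X_i$. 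The device of evaluating the $t$-independent horizontal sum at $t=1$ to identify it with $\Delta_h=\Delta_M-\Delta_v$ is clean and spares you any direct computation of $\Delta_h$; the reference instead develops the horizontal/vertical Laplacian calculus systematically, which buys generality (a full description of how the spectrum assembles) at the cost of length, while your frame-and-Koszul argument buys transparency: it isolates precisely which geometric facts make \eqref{eq:laplacians} true. One point to flag: you quote from the paper's setup that the fibers of $(M,g_t)$ remain totally geodesic and isometric to $(F,t^2k)$; that is legitimate since the paper asserts it, but if you want the proof fully self-contained, the same Koszul computation with two vertical entries and one basic horizontal entry proves it, because every surviving term carries the common factor $t^2$.
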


For the proof of the above formula, see \cite[Prop 5.3]{bbb}. We will now use it to characterize the spectrum of $\Delta_t$ in terms of the spectrum of the original Laplacian $\Delta_M$ and the vertical Laplacian $\Delta_v$.

\begin{corollary}\label{cor:specincl}
For each $t>0$, the following inclusion holds
\begin{equation*}
\sigma(\Delta_t)\subset\sigma(\Delta_M)+\left(\tfrac{1}{t^2}-1\right)\sigma(\Delta_v).
\end{equation*}
Since the above spectra are discrete, this means that every eigenvalue $\lambda(t)$ of $\Delta_t$ is of the form
\begin{equation}\label{eq:lambdakj}
\lambda^{k,j}(t)=\mu_k+\left(\tfrac{1}{t^2}-1\right)\phi_j,
\end{equation}
for some eigenvalues $\mu_k$ and $\phi_j$ of $\Delta_M$ and $\Delta_v$ respectively.
\end{corollary}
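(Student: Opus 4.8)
The plan is to diagonalize $\Delta_t$ using the simultaneous eigenbasis provided by Theorem~\ref{thm:bbb} and then read off its spectrum directly from the operator identity \eqref{eq:laplacians}. First I would apply Theorem~\ref{thm:bbb} to obtain a Hilbert basis $\{f_i\}_{i\in\N}$ of $L^2(M,\vol_g)$ consisting of simultaneous eigenfunctions of $\Delta_M$ and $\Delta_v$, say $\Delta_M f_i=\mu_{k(i)}f_i$ and $\Delta_v f_i=\phi_{j(i)}f_i$ with $\mu_{k(i)}\in\sigma(\Delta_M)$ and $\phi_{j(i)}\in\sigma(\Delta_v)$. Elliptic regularity for $\Delta_M$ guarantees that each $f_i$ is smooth, hence lies in the common domain on which \eqref{eq:laplacians} is valid. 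Since the fibers of $g_t$ are $(F,t^2k)$, the volume density scales by the constant factor $\vol_{g_t}=t^{\dim F}\vol_g$, so $L^2(M,\vol_{g_t})$ and $L^2(M,\vol_g)$ coincide as topological vector spaces with proportional inner products; in particular $\{f_i\}$, suitably renormalized, remains a Hilbert basis for $L^2(M,\vol_{g_t})$.

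Next I would apply \eqref{eq:laplacians} to each smooth $f_i$ to compute
\[
\Delta_t f_i=\Delta_M f_i+\bigl(\tfrac1{t^2}-1\bigr)\Delta_v f_i=\Bigl(\mu_{k(i)}+\bigl(\tfrac1{t^2}-1\bigr)\phi_{j(i)}\Bigr)f_i,
\]
so every basis element is an eigenfunction of $\Delta_t$ with eigenvalue $\lambda_i:=\mu_{k(i)}+\bigl(\tfrac1{t^2}-1\bigr)\phi_{j(i)}$. To establish the inclusion, let $\lambda\in\sigma(\Delta_t)$ be an arbitrary eigenvalue with eigenfunction $u\neq 0$, and expand $u=\sum_i c_i f_i$ in this basis. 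Since $\Delta_t$ is self-adjoint and is diagonalized by the $\{f_i\}$, the spectral theorem gives $\Delta_t u=\sum_i c_i\lambda_i f_i$; comparing this with $\Delta_t u=\lambda u=\sum_i c_i\lambda f_i$ and using uniqueness of the coefficients in an orthonormal basis forces $\lambda_i=\lambda$ for every index $i$ with $c_i\neq 0$. As $u\neq 0$, at least one such index exists, whence $\lambda=\mu_{k(i)}+\bigl(\tfrac1{t^2}-1\bigr)\phi_{j(i)}$, which is exactly the asserted form \eqref{eq:lambdakj}. This yields $\sigma(\Delta_t)\subset\sigma(\Delta_M)+\bigl(\tfrac1{t^2}-1\bigr)\sigma(\Delta_v)$.

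Given the tools already in hand, the argument is short, and the one genuinely delicate point is the passage from \emph{$\{f_i\}$ is a complete orthonormal set of eigenvectors} to the spectral identity $\Delta_t u=\sum_i c_i\lambda_i f_i$, i.e.\ ruling out any spectrum of the self-adjoint $\Delta_t$ beyond the values $\lambda_i$. I expect this to be the step needing the most care: it relies on identifying $\Delta_t$ with multiplication by $\lambda_i$ on $\ell^2$ via the spectral theorem, which in turn uses that the smooth $f_i$ lie in the domain of the self-adjoint extension $\Delta_t$ (clear from essential self-adjointness of the Laplacian on $C^\infty(M)$). Since $(M,g_t)$ is compact, $\sigma(\Delta_t)$ is moreover discrete, so no closure issues arise and the eigenvalue-by-eigenvalue comparison above suffices to conclude.
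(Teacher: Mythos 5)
Your proof is correct, but it takes a genuinely different route from the paper's. The paper never touches the eigenbasis directly: it deduces from Theorem~\ref{thm:bbb} that $\Delta_M$ and $\Delta_v$ commute strongly (in the sense of Remark~\ref{rem:comm}), invokes the spectral theorem to conjugate both operators by a single unitary into multiplication operators $T_{f_M}$ and $T_{f_v}$, reads off $\sigma(\Delta_t)$ via \eqref{eq:laplacians} as the essential range of $f_M+\left(\tfrac{1}{t^2}-1\right)f_v$, bounds this by the closure of $\sigma(\Delta_M)+\left(\tfrac{1}{t^2}-1\right)\sigma(\Delta_v)$, and only at the end uses discreteness to drop the closure. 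You instead use the simultaneous eigenbasis that Theorem~\ref{thm:bbb} literally provides, apply \eqref{eq:laplacians} only as a pointwise identity of differential operators on the smooth functions $f_i$, and conclude by an orthogonal-expansion argument together with discreteness of $\sigma(\Delta_t)$. Your route buys two things: it bypasses the strong-commutativity subtleties of Remark~\ref{rem:comm} (projection-valued measures) altogether, and it avoids having to interpret \eqref{eq:laplacians} as an identity of unbounded self-adjoint operators with their domains, using it only on smooth functions where it is unambiguous; you also make explicit a point the paper leaves implicit, namely that $\vol_{g_t}=t^{\dim F}\vol_g$, so orthogonality and self-adjointness survive the change of metric. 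The paper's route, in exchange, works purely at the level of spectra and essential ranges, so it would survive in situations where no full eigenbasis is available (e.g.\ continuous spectrum), and it needs discreteness only to remove a closure rather than to identify the spectrum with the set of eigenvalues. One suggestion on the step you flagged as delicate: rather than invoking the spectral theorem to assert $\Delta_t u=\sum_i c_i\lambda_i f_i$, it is cleaner, and fully rigorous with what you already have, to pair against each basis vector: $\lambda c_j=\langle\Delta_t u,f_j\rangle=\langle u,\Delta_t f_j\rangle=\lambda_j c_j$, so $c_j=0$ unless $\lambda_j=\lambda$; this uses only self-adjointness of $\Delta_t$ and the fact that each smooth $f_j$ lies in its domain, with no need to identify $\Delta_t$ globally as a diagonal operator.
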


\begin{proof}
It follows from Theorem~\ref{thm:bbb} that $\Delta_M$ and $\Delta_v$ \emph{commute}.\footnote{Since these are \emph{unbounded} self-adjoint operators, the correct notion of commutativity is given in terms of commutativity of the projections in the associated projection-valued measures, see Remark~\ref{rem:comm}.} Thus, by the Spectral Theorem, such operators are simultaneously diagonalizable in the sense that there exists a unitary operator $U$ of $L^2(M,\vol_g)$ such that $U\Delta_M U^{-1}=T_{f_M}$ and $U\Delta_v U^{-1}=T_{f_v}$ are operators of multiplication by functions $f_M$ and $f_v$ respectively. For such multiplication operators $T_f$, the spectrum $\sigma(T_f)$ is the essential range of $f$. Thus, from \eqref{eq:laplacians}, $\sigma(\Delta_t)$ is the essential range of $f_M+\left(\tfrac{1}{t^2}-1\right)f_v$, which is contained in the closure of the sum of the essential ranges of $f_M$ and $\left(\tfrac{1}{t^2}-1\right)f_v$, which in turn is the closure of $\sigma(\Delta_M)+\left(\tfrac{1}{t^2}-1\right)\sigma(\Delta_v)$. Since both spectra are discrete, we may remove the closure and the proof is complete.
\end{proof}

Not all possible combinations of $\mu_k$ and $\phi_j$ on \eqref{eq:lambdakj} give rise to an eigenvalue of $\Delta_t$. In fact, this only happens when the total space of the submersion is a Riemannian product. As mentioned before, determining which combinations are allowed depends on the global geometry of the submersion. In the general case, this problem was solved in \cite{bb}. As we will see, in the case of homogeneous spheres, one can use simpler observations of Tanno~\cite{tanno,tanno2} to obtain a sufficiently good refinement of Corollary~\ref{cor:specincl}, see Lemmas~\ref{lemma:tanno4n+3} and \ref{lemma:tanno15}. From the above, we obtain a Hilbert basis of eigenfunctions of $\Delta_t$ which clearly varies with $t$. In particular, the ordering of the eigenvalues of $\Delta_t$ \emph{may} change with $t$. This behavior indeed occurs in our applications.

\begin{remark}
The canonical variation $g_t$ only exists for $0<t<+\infty$, however, considering spectral theory also on \emph{Alexandrov spaces}, one may inquire about the spectrum of the limits $t\to 0$ and $t\to +\infty$. Indeed, the Gromov-Hausdorff limits of $(M,g_t)$ as $t\to 0$ and as $t\to +\infty$ exist. The first is the base $B$ of the submersion, since the fibers collapse to a point. The second is a compact sub-Riemannian manifold (typically with Hausdorff dimension higher than $m$). The $k^{th}$ eigenvalue $\lambda_k(t)$ of the Laplacian $\Delta_t$ is a continuous (and, in general, not differentiable) function of $t$, for $t\in[0,+\infty]$, see \cite{shioya}. In particular, the limits $\lim_{t\to0}\lambda_k(t)$ and $\lim_{t\to+\infty}\lambda_k(t)$ are eigenvalues of the Laplacian of $B=\lim_{t\to0}(M,g_t)$ and of $\lim_{t\to+\infty}(M,g_t)$, respectively. The above claims will be indeed verified in our examples.
\end{remark}

\section{Homogeneous metrics on spheres}
\label{sec:homsph}

Consider the following Riemannian submersions with totally geodesic fibers
\begin{equation*}
S^1\to S^{2n+1}\to\C P^n, \quad S^3\to S^{4n+3}\to\Hr P^n, \quad S^7\to S^{15}\to S^8\left(\tfrac12\right),
\end{equation*}
where the fibers and total spaces are equipped with the round metric. Denote the canonical variation of the above by
\begin{equation}\label{eq:hommetrics}
(S^{2n+1},\gt), \quad (S^{4n+3},\hs), \quad (S^{15},\kt).
\end{equation}
These are $1$-parameter families of homogeneous metrics (often called \emph{Berger spheres}) that are invariant under $\U(n+1)$, $\Sp(n+1)$ and $\Spin(9)$, respectively. In the remainder of this paper, we study bifurcation of solutions to the Yamabe problem along one of the three above families.

Homogeneous metrics on spheres (actually, on compact symmetric spaces of rank one) were classified by Ziller~\cite{ziller} in 1982. Apart from $\{\gt\}$, $\{\hs\}$ and $\{\kt\}$, the only remaining homogeneous metrics on spheres are the $\Sp(n+1)$-homogeneous metrics on $S^{4n+3}$ obtained from scaling the three different circles in the fiber $S^3$ by \emph{distinct} factors (the metrics $\{\hs\}$ correspond to scaling all three circles the same amount). In other words, those remaining metrics are the ones for which the $3$-dimensional Hopf fibers of $S^3\to S^{4n+3}\to \Hr P^n$ are endowed with a left-invariant metric on $S^3$ that is \emph{not} a multiple of the round metric. Note that any two different homogeneous metrics on $S^m$ are non-conformal.

\begin{remark}
It would be possible to use our same techniques to give statements about this $3$-parameter family by first considering the deformation $\hs$ and then deforming again by scaling the metric along one of the three possible subgroups $S^1$ inside the fiber $S^3$. Nevertheless, this yields somewhat artificial results since only deformations in very specific directions inside this family are considered.
\end{remark}

\subsection{Scalar curvatures}
The metrics $\{\gt\}$, $\{\hs\}$ and $\{\kt\}$ are homogeneous hence have constant scalar curvature. In order to compute them, one can use the Gray-O'Neill formulas, obtaining the following (cf.~\cite[Examples 9.81, 9.82 and 9.84]{besse}).

\begin{proposition}\label{prop:scalberger}
The following formulas hold:
\begin{eqnarray}
\scal(S^{2n+1},\gt) &=& 2n(2n+2-t^2),\label{eq:scalberger} \\
\scal(S^{4n+3},\h_t) &=& 2\left(\tfrac{3}{t^2}+8n(n+2)-6nt^2\right), \label{eq:scal4n+3t} \\
\scal(S^{15},\k_t) &=& 14\left(\tfrac{3}{t^{2}} + 16 - 4 t^2\right). \label{eq:scal15t}
\end{eqnarray}
\end{proposition}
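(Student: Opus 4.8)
The plan is to apply the O'Neill formula for the scalar curvature of a Riemannian submersion with totally geodesic fibers, specialized to the canonical variation $g_t$. Since the fibers are totally geodesic, the integrability tensor $T$ and the mean curvature of the fibers vanish, so the Gray–O'Neill formula (cf.~\cite[9.36, 9.70]{besse}) reduces to
\begin{equation*}
\scal(g_t)=\scal(B)+\tfrac{1}{t^2}\,\scal(F,k)-t^2\,|A|^2,
\end{equation*}
where $\scal(B)$ is the (pulled-back) scalar curvature of the base, $\scal(F,k)$ that of the unscaled fiber, and $A$ is the O'Neill integrability tensor of the original submersion. By homogeneity, $|A|^2$ is constant on each total space, so the right-hand side is a genuine function of $t$ alone. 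First I would record this formula and verify the scaling of each term: the base metric is untouched, the fiber metric becomes $t^2k$ so its scalar curvature scales by $1/t^2$, and the norm of $A$ measured with respect to $g_t$ picks up a factor $t^2$ (the horizontal frame and the bracket are unchanged, only the vertical inner product rescales).

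The next step is to identify the three constants $\scal(B)$, $\scal(F,k)$ and $|A|^2$ in each case. The fiber scalar curvatures are immediate from the round metrics of the unit sphere: $\scal(S^1)=0$, $\scal(S^3)=6$ and $\scal(S^7)=42$. The base scalar curvatures are read off from the standard symmetric/Einstein geometry induced by the Hopf fibration of the unit sphere: $\C P^n$ is Einstein with $\Ric=(2n+2)g$, giving $\scal=2n(2n+2)$; $\Hr P^n$ is Einstein with $\Ric=(4n+8)g$, giving $\scal=16n(n+2)$; and the base of the octonionic fibration is the round $S^8(\tfrac12)$, with $\scal=8\cdot 7\cdot 4=224$.

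Rather than compute $|A|^2$ directly from the integrability tensor, I would pin it down using the anchor value $t=1$, where $g_1$ is the round metric of $S^m$ and hence $\scal(g_1)=m(m-1)$. Setting $t=1$ in the displayed formula yields $|A|^2=\scal(B)+\scal(F,k)-m(m-1)$, which gives $|A|^2=2n$, $12n$ and $56$ in the three cases respectively. Substituting these constants back and simplifying then produces exactly \eqref{eq:scalberger}, \eqref{eq:scal4n+3t} and \eqref{eq:scal15t}.

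The only genuinely delicate point is keeping the normalization conventions consistent: the radii of base and fiber induced by the \emph{unit} total sphere, the precise Einstein constants of $\C P^n$ and $\Hr P^n$, and the convention under which the canonical variation scales the fibers by $t^2$ rather than $t$. Using the round value at $t=1$ as an anchor for $|A|^2$ is what makes the argument robust: it bypasses a direct frame computation of the integrability tensor, absorbs any constant-factor ambiguity in the definition of $|A|^2$ into a single normalization, and simultaneously serves as a consistency check on the base and fiber scalar curvatures.
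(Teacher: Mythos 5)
Your proof is correct and takes essentially the same route as the paper, which gives no details beyond invoking the Gray--O'Neill formulas for the canonical variation (citing Besse, Examples 9.81, 9.82 and 9.84): the identity $\scal(g_t)=\scal(B)+\tfrac{1}{t^2}\scal(F,k)-t^2|A|^2$ is exactly Besse's formula for a canonical variation with totally geodesic fibers, and your base/fiber curvature values are the standard ones. Your device of pinning down $|A|^2$ by evaluating at $t=1$ (giving $2n$, $12n$ and $56$, respectively) is a clean substitute for the direct frame computation of the integrability tensor, and all three resulting formulas check out; the only nitpick is terminological --- $T$ is the second fundamental form of the fibers, not the ``integrability tensor,'' a name reserved for $A$.
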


\section{\texorpdfstring{Homogeneous spheres $(S^{2n+1},\gt)$}{Homogeneous spheres I}}\label{sec:s2n+1t}

\subsection{First eigenvalue of the Laplacian}
We use Corollary~\ref{cor:specincl} to study the eigenvalues of the Laplacian $\Delta_t$ of $(S^{2n+1},\g_t)$. As mentioned before, the spectrum of $\Delta_v$ coincides with that of the fibers. The $k^{th}$ eigenvalue\footnote{Henceforth, by an eigenvalue of a Riemannian manifold $(M,g)$ we mean an eigenvalue of its Laplacian $\Delta_g$ acting on $L^2(M,\vol_g)$.} of a $m$-dimensional round sphere is well-known to be $k(k+m-1)$, see \cite{bgm}. Thus, from \eqref{eq:lambdakj}, the eigenvalues of $\Delta_t$ are of the form
\begin{equation*}
\lambda^{k,j}(t)=k(k+2n)+\left(\tfrac{1}{t^2}-1\right)j^2,
\end{equation*}
for some $k,j\in\N\cup\{0\}$. The following refinement of the above is due to Tanno~\cite[Lemma 3.1]{tanno}.

\begin{lemma}\label{lemma:tanno}
For each eigenvalue $\mu_k=k(k+2n)$ of $(S^{2n+1},\g_1)$, denote its corresponding eigenspace by $E_k\subset L^2(S^{2n+1},\vol_{\g_1})$. Then $E_k$ has a orthogonal decomposition\footnote{Notice that some of the $E_k^j$'s may be trivial.} in simultaneous eigenspaces
\begin{equation*}
E_k=E_k^k+E_k^{k-2}+\dots+E_k^{k-2\left\lfloor k/2\right\rfloor},
\end{equation*}
where $\left\lfloor k/2 \right\rfloor$ denotes the largest integer less than or equal to $k/2$, and for each $\psi\in E_k^j$, $\Delta_v \psi=j^2 \psi$. In particular, $\lambda^{k,j}(t)$ can only be an eigenvalue of $\Delta_t$ if $0\leq j\leq k$ and $k-j$ is even, i.e., the eigenvalues of $\Delta_t$ can only be among
\begin{equation}\label{eq:lambdas}
\lambda^{k,j}(t)=k(k+2n)+\left(\tfrac{1}{t^2}-1\right)j^2, \quad j=k,k-2,\dots,k-2\left\lfloor k/2 \right\rfloor.
\end{equation}
\end{lemma}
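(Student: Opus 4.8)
The plan is to realize each eigenspace $E_k$ concretely as restrictions of harmonic polynomials, exploit the complex structure of $\C^{n+1}\cong\R^{2n+2}$ to split these polynomials by bidegree, and then check that the vertical Laplacian acts as a scalar on each bidegree component. First I would recall the standard description of the spectrum of the round sphere: the eigenspace $E_k\subset L^2(S^{2n+1},\vol_{\g_1})$ for the eigenvalue $\mu_k=k(k+2n)$ is exactly the space of restrictions to $S^{2n+1}$ of harmonic homogeneous polynomials of degree $k$ on $\R^{2n+2}$, see \cite{bgm}. Identifying $\R^{2n+2}$ with $\C^{n+1}$ via its standard complex structure, I would invoke the classical orthogonal decomposition of homogeneous harmonic polynomials by bidegree, $\mathcal H^k=\bigoplus_{p+q=k}\mathcal H^{p,q}$, where $\mathcal H^{p,q}$ denotes the harmonic polynomials homogeneous of degree $p$ in $z_1,\dots,z_{n+1}$ and of degree $q$ in $\bar z_1,\dots,\bar z_{n+1}$. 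Each $\mathcal H^{p,q}$ is $\U(n+1)$-invariant, and the decomposition is orthogonal with respect to the $L^2$ inner product on $S^{2n+1}$.

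The key step is to identify how $\Delta_v$ acts on the restriction of a polynomial $P\in\mathcal H^{p,q}$. The fiber of $S^1\to S^{2n+1}\to\C P^n$ through a point $z$ with $|z|=1$ is the Hopf circle $\theta\mapsto e^{i\theta}z$, which is parametrized by arc length for the round metric, so the vertical Laplacian along this fiber is $\Delta_v=-\partial_\theta^2$. Since $P$ has bidegree $(p,q)$, one has $P(e^{i\theta}z)=e^{i(p-q)\theta}P(z)$, whence $(\Delta_v P)(z)=-\partial_\theta^2\big(e^{i(p-q)\theta}P(z)\big)\big|_{\theta=0}=(p-q)^2P(z)$. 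Thus the restriction of $\mathcal H^{p,q}$ lies in the $\Delta_v$-eigenspace for the eigenvalue $(p-q)^2$, while remaining inside $E_k$ because the total degree $p+q=k$ is unchanged.

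Finally I would assemble these pieces. Setting $j=|p-q|$, I collect the bidegree components sharing the same value of $j$ and define $E_k^j$ to be the span, inside $E_k$, of all $\mathcal H^{p,q}$ with $p+q=k$ and $|p-q|=j$; explicitly $E_k^j=\mathcal H^{(k+j)/2,(k-j)/2}\oplus\mathcal H^{(k-j)/2,(k+j)/2}$ for $j>0$, and $E_k^0=\mathcal H^{k/2,k/2}$ when $k$ is even. These are eigenspaces of the self-adjoint operator $\Delta_v$ for the distinct eigenvalues $j^2$, hence are mutually orthogonal, and their sum exhausts $E_k$ by the bidegree decomposition above. As $p,q\geq0$ range over $p+q=k$, the quantity $j=|p-q|$ runs precisely through $\{k,k-2,\dots,k-2\lfloor k/2\rfloor\}$, i.e.\ over the nonnegative integers with $0\le j\le k$ and $k-j$ even. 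This yields the claimed simultaneous eigenspace decomposition of $E_k$, and consequently the constraint that $\lambda^{k,j}(t)$ in \eqref{eq:lambdakj} can occur as an eigenvalue of $\Delta_t$ only for such $j$, which is \eqref{eq:lambdas}.

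I expect the only genuinely delicate point to be the computation $\Delta_v P=(p-q)^2 P$ on $\mathcal H^{p,q}$, since it hinges on correctly matching the geometric (arc-length) parametrization of the Hopf fibers with the normalization of the round metric, and on the homogeneity convention for bidegree; once this scalar action is established, the orthogonality, the exhaustion of $E_k$, and the parity-and-range bookkeeping for $j$ all follow formally from the self-adjointness of $\Delta_v$ and the classical bidegree decomposition.
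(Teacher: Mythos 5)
Your proof is correct, but it takes a genuinely different route from the paper's. The paper starts from a simultaneous eigenfunction $\psi\in E_k$ with $\Delta_v\psi=j^2\psi$ (whose existence is already supplied by Theorem~\ref{thm:bbb}) and restricts it to a single Hopf fiber $\gamma$, which is a great circle: as a solution of $-\psi''=j^2\psi$ along $\gamma$, the restriction is a pure sinusoid $c_1\sin(js+c_2)$, while as the restriction of a degree-$k$ harmonic homogeneous polynomial it is a trigonometric polynomial $\sum_{l=0}^k c_l\cos^l(s)\sin^{k-l}(s)$, whose frequencies lie in $\{k,k-2,\dots\}$; matching the two expressions forces $j$ into that set. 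You instead build the decomposition explicitly: identify $\R^{2n+2}\cong\C^{n+1}$, split harmonic polynomials by bidegree $\mathcal H^k=\bigoplus_{p+q=k}\mathcal H^{p,q}$, and compute from the circle action that $\Delta_v$ acts as $(p-q)^2$ on each piece. Both arguments are sound, and the trade-off is real. The paper's argument is shorter (it never exhibits the eigenspaces) and, more importantly, transfers essentially verbatim to the quaternionic and octonionic fibrations, which is exactly how Lemmas~\ref{lemma:tanno4n+3} and~\ref{lemma:tanno15} are obtained; your bidegree splitting is tied to the $S^1$-action on $\C^{n+1}$ and has no equally simple analogue when the fiber is $S^3$ or $S^7$. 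In return, your construction buys more for this particular family: it produces the decomposition without invoking Theorem~\ref{thm:bbb}; it determines exactly which $E_k^j$ are non-trivial (since $z_1^p\bar z_2^q\in\mathcal H^{p,q}$ is harmonic for $n\geq1$, \emph{all} admissible $E_k^j$ are non-zero, which recovers and strengthens Lemma~\ref{lemma:tanno2}, quoted in the paper from Tanno without proof); and it gives direct access to the dimensions $\dim E_k^j$ used for the multiplicities in Proposition~\ref{prop:firsteig}. One small point you should make explicit: the spaces $\mathcal H^{p,q}$ consist of complex-valued polynomials, while the paper's $L^2$ space is real; since your $E_k^j=\mathcal H^{(k+j)/2,(k-j)/2}\oplus\mathcal H^{(k-j)/2,(k+j)/2}$ is stable under conjugation (as $\overline{\mathcal H^{p,q}}=\mathcal H^{q,p}$), taking real and imaginary parts yields the real simultaneous eigenspace, so the issue is harmless but worth a sentence.
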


\begin{proof}
Note that since the fibers have dimension one, denoting by $V$ the unit tangent field to the Hopf fibers and identifying it with the derivation operator of $L^2(S^{2n+1},\vol_{\g_1})$ in the direction $V$, we have $\Delta_v=-VV$. So, if $\psi\in E_k$ satisfies $\Delta_v\psi=j^2\psi$, then along any geodesic $\gamma(s)$ tangent to $V$, we must have
\begin{equation}\label{eq:res1}
\psi(\gamma(s))=c_1\sin(js+c_2),
\end{equation}
where $c_1$ and $c_2$ are real constants depending on $\gamma$.

On the other hand, since $\psi\in E_k$, from standard theory of spherical harmonics, $\psi$ is the restriction of a harmonic homogeneous polynomial of degree $k$ in $\R^{2n+2}$ to $S^{2n+1}$, see \cite{bgm}. Thus, the restriction of $\psi$ to $\gamma$ must also be of the form
\begin{equation}\label{eq:res2}
\psi(\gamma(s))=\sum_{l=0}^k c_l\cos^l(s)\sin^{k-l}(s),
\end{equation}
where $c_l$ are real constants depending on $\gamma$. Since \eqref{eq:res1} and \eqref{eq:res2} must coincide, it follows that $j\in\{k,k-2,\dots,k-2\left\lfloor k/2 \right\rfloor\}$.
\end{proof}

This already imposes several necessary conditions on $k,j\in\N\cup\{0\}$ to form an eigenvalue $\lambda^{k,j}(t)$ of $\Delta_t$. Clearly, the problem of determining which such $k,j\in\N\cup\{0\}$ indeed give rise to $\lambda^{k,j}(t)\in\sigma(\Delta_t)$ is equivalent to determining which $E_k^j$ are non-trivial. By constructing explicit elements of $E_k^j$ using spherical harmonics, Tanno~\cite{tanno} also observed the following.

\begin{lemma}\label{lemma:tanno2}
The following $E_k^j$ are non-trivial:
\begin{itemize}
\item[(i)] $E_k^k$, for any $k$;
\item[(ii)] $E_k^1$, if $k$ is odd;
\item[(iii)] $E_k^0$ and $E_k^2$, if $k$ is even.
\end{itemize}
\end{lemma}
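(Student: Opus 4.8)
The plan is to complexify the problem and identify $E_k$ with classical complex spherical harmonics, after which each required eigenspace can be exhibited by writing down a single explicit polynomial. Write $\R^{2n+2}=\C^{n+1}$ with complex coordinates $z_0,\dots,z_n$, so that $S^{2n+1}=\{|z|=1\}$ and the Hopf circle acts by $z\mapsto e^{i\theta}z$; its infinitesimal generator is (a unit multiple of) the vertical field $V$, and along a fiber $s=\theta$ is arc length, since the velocity $ie^{i\theta}z$ has unit norm. As recalled in the proof of Lemma~\ref{lemma:tanno}, $E_k$ is spanned by restrictions of harmonic homogeneous polynomials of degree $k$ and $\Delta_v=-VV$. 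The first step is the key observation that a polynomial of bidegree $(p,q)$ (degree $p$ in the $z_l$, degree $q$ in the $\bar z_l$, with $p+q=k$) satisfies $P(e^{i\theta}z)=e^{i(p-q)\theta}P(z)$, hence restricts along a Hopf fiber to $e^{i(p-q)s}P(z)$. Therefore $\Delta_v P=(p-q)^2 P$, so any such harmonic $P$ lies in $E_k^{|p-q|}$. This reduces the whole lemma to producing, for each prescribed value of $|p-q|$, one nonzero harmonic polynomial of degree $k$ of the corresponding bidegree.

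The second step supplies these polynomials explicitly. Using $\Delta_{\R^{2n+2}}=4\sum_l \partial_{z_l}\partial_{\bar z_l}$, one checks at once that for \emph{distinct} indices $a\neq b$ the monomial $z_a^p\,\bar z_b^q$ is harmonic, since each operator $\partial_{z_l}\partial_{\bar z_l}$ annihilates it; it has bidegree $(p,q)$. In particular $z_0^{\,k}$ is harmonic of bidegree $(k,0)$ and gives a nonzero element of $E_k^k$ for every $k$, which is case (i). For case (ii), with $k$ odd one takes $p=(k+1)/2$, $q=(k-1)/2$ and the polynomial $z_0^{\,p}\,\bar z_1^{\,q}$, of bidegree with $p-q=1$, so it lies in $E_k^1$. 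For case (iii), with $k$ even, $z_0^{\,k/2}\,\bar z_1^{\,k/2}$ has $p-q=0$ and shows $E_k^0\neq 0$ (for $k=0$ this is simply the constant function), while $z_0^{\,k/2+1}\,\bar z_1^{\,k/2-1}$ has $p-q=2$ and shows $E_k^2\neq 0$ for all even $k\geq 2$. Each of these is a nonzero homogeneous polynomial, hence restricts to a nonzero function on $S^{2n+1}$ by homogeneity, and lies in the asserted $E_k^j$ by the weight computation above.

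I do not expect a genuine obstacle; the two points that must be handled carefully are both short. The first is pinning down the normalization that identifies $V$ with the generator of $z\mapsto e^{i\theta}z$, so that the $S^1$-weight $p-q$ matches the $\Delta_v$-eigenvalue $j^2=(p-q)^2$ forced by Lemma~\ref{lemma:tanno}; this is exactly the arc-length computation along the fiber. The second is that the construction $z_a^p\,\bar z_b^q$ needs two distinct complex coordinates, hence $n\geq 1$ (the degenerate case $n=0$ being the trivial fibration $S^1\to S^1$). Finally, all computations are naturally phrased for complex-valued functions, so I would note that the real eigenspaces are recovered by taking real and imaginary parts, which preserves both the degree $k$ and the $\Delta_v$-eigenvalue $j^2$; thus nontriviality over $\C$ yields nontriviality over $\R$.
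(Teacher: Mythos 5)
Your proposal is correct, and it follows exactly the route the paper indicates: the paper does not prove this lemma itself but attributes it to Tanno's construction of explicit elements of $E_k^j$ via spherical harmonics, which is precisely what you carry out (harmonic monomials $z_a^p\bar z_b^q$ of bidegree $(p,q)$ with vertical weight $|p-q|$). Your harmonicity check, the identification $\Delta_v P=(p-q)^2P$ along unit-speed Hopf fibers, and the passage to real and imaginary parts are all sound, so the argument stands as a complete self-contained proof of the cited fact.
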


We are now ready to compute the bottom of the spectrum of $(S^{2n+1}\g_t)$, by extracting the least possible $\lambda^{k,j}(t)$ among \eqref{eq:lambdas}.

\begin{proposition}\label{prop:firsteig}
The first non-zero eigenvalue of the Laplacian of $(S^{2n+1},\g_t)$ is
\begin{equation}\label{eq:lambda1s2n+1}
\lambda_1(t)=\begin{cases} 4(1+n), & 0<t\leq\tfrac{1}{\sqrt{2(2+n)}} \\ 2n+\tfrac{1}{t^2}, & t\geq\tfrac{1}{\sqrt{2(2+n)}} \end{cases}
\end{equation}
with multiplicity given by
\begin{equation*}
m_1(t)=\begin{cases} n(n+2), & 0<t<\tfrac{1}{\sqrt{2(2+n)}} \\
n^2+4n+2, & t=\tfrac{1}{\sqrt{2(2+n)}} \\
2(n+1), & t>\tfrac{1}{\sqrt{2(2+n)}}.\end{cases}
\end{equation*}
\end{proposition}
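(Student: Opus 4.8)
The plan is to read off $\lambda_1(t)$ as the minimum of the admissible candidate values $\lambda^{k,j}(t)$ supplied by Lemma~\ref{lemma:tanno}, and then to determine its multiplicity by summing the dimensions of the eigenspaces $E_k^j$ that realize this minimum. The argument naturally splits into a lower bound and a matching upper bound. For the lower bound I would use that, by Lemma~\ref{lemma:tanno}, the spectrum of $\Delta_t$ minus its zero eigenvalue is contained in the set of numbers $\lambda^{k,j}(t)=k(k+2n)+(\tfrac1{t^2}-1)j^2$ with $k\geq 1$, $0\leq j\leq k$ and $k-j$ even; minimizing this \emph{purely formal} expression over all such pairs produces a quantity that is automatically $\leq\lambda_1(t)$. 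For the matching upper bound I would invoke Lemma~\ref{lemma:tanno2} to guarantee that the minimizing pair actually corresponds to a \emph{nontrivial} eigenspace, so that the formal minimum is genuinely attained in $\sigma(\Delta_t)$.

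The minimization itself is elementary. Writing $s=1/t^2$, so that $\lambda^{k,j}(t)=k^2+2nk+(s-1)j^2$, I would first minimize over $j$ for fixed $k$: when $s\geq 1$ the nonnegative coefficient $s-1$ pushes the minimum to the smallest admissible $j$ (namely $j=0$ for even $k$ and $j=1$ for odd $k$), while for $s<1$ it pushes the minimum to $j=k$. In each regime the resulting expression is increasing in $k$, so across both regimes only two cases can furnish the overall minimum: $\lambda^{2,0}(t)=4(1+n)$, realized by $E_2^0$, and $\lambda^{1,1}(t)=2n+\tfrac1{t^2}$, realized by $E_1^1=E_1$. Comparing $4(1+n)$ with $2n+\tfrac1{t^2}$ gives the threshold $t=\tfrac1{\sqrt{2(2+n)}}$ at which they coincide, which is exactly the claimed two-branch formula for $\lambda_1(t)$; both minimizing eigenspaces are nontrivial by Lemma~\ref{lemma:tanno2}, so the lower and upper bounds meet.

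For the multiplicities I would compute the two relevant dimensions separately. The factor $E_1^1=E_1$ is the first nonzero eigenspace of the round $S^{2n+1}$, i.e. the restrictions of linear functions on $\R^{2n+2}$, of dimension $2(n+1)$. For $E_2^0$ I would identify it with the Hopf-invariant part of the degree-$2$ spherical harmonics: writing $\R^{2n+2}=\C^{n+1}$, a degree-$2$ harmonic polynomial is invariant under $z\mapsto e^{i\theta}z$ precisely when it lies in the span of the monomials $z_a\bar z_b$, and harmonicity (the vanishing of the trace of the coefficient matrix) cuts this $(n+1)^2$-dimensional space down to dimension $(n+1)^2-1=n(n+2)$; equivalently, $E_2^0$ is the pullback of the first nonzero eigenspace of the base $\C P^n$. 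Summing at the threshold then gives $2(n+1)+n(n+2)=n^2+4n+2$, matching the three cases of $m_1(t)$.

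The step requiring the most care is verifying that the multiplicity is \emph{constant} on each open branch, i.e. that away from $t=\tfrac1{\sqrt{2(2+n)}}$ no other admissible $\lambda^{k,j}(t)$ accidentally equals $\lambda_1(t)$ and inflates the count. I would settle this by solving the equality $\lambda^{k,j}(t)=\lambda_1(t)$ directly: rewriting it as $(k-1)(k+1+2n)+(s-1)(j^2-1)=0$ shows that a coincidence with $\lambda^{1,1}$ for $k\geq 2$ forces $j=0$ (hence $k$ even) and $s=1+(k-1)(k+1+2n)$, whose only solution inside the relevant range is $k=2$ at the threshold itself, while a short sign analysis of $4(1+n)-k(k+2n)$ rules out extra coincidences with $\lambda^{2,0}$ on the open interval $0<t<\tfrac1{\sqrt{2(2+n)}}$. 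This degeneracy bookkeeping, together with the dimension count for $E_2^0$, is the genuinely delicate part; the remaining estimates are routine monotonicity arguments.
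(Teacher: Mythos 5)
Your proof is correct, and for the eigenvalue formula itself it follows the same route as the paper: restrict the candidates via Lemma~\ref{lemma:tanno}, reduce the minimization over admissible pairs $(k,j)$ to the comparison of $\lambda^{2,0}(t)=4(1+n)$ with $\lambda^{1,1}(t)=2n+\tfrac1{t^2}$, and invoke Lemma~\ref{lemma:tanno2} so that both values are genuinely attained in $\sigma(\Delta_t)$. Where you genuinely depart from the paper is the multiplicity statement: the paper does not prove it, but defers the dimension count of the spaces $E_k^j$ to Tanno's paper \cite{tanno}, whereas you compute everything in-house. Your identification of $E_1^1=E_1$ with restrictions of linear forms (dimension $2(n+1)$), and of $E_2^0$ with the trace-free Hermitian quadratics $\sum_{a,b} c_{ab}z_a\bar z_b$ (real dimension $(n+1)^2-1=n(n+2)$, i.e.\ the pullback of the first eigenspace of $\C P^n$), is correct. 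Moreover, your final ``degeneracy bookkeeping'' step, solving $(k-1)(k+1+2n)+(s-1)(j^2-1)=0$ to rule out accidental coincidences of other admissible $\lambda^{k,j}(t)$ with $\lambda_1(t)$ on the open branches, is precisely what is needed for the three-case formula for $m_1(t)$ to hold, and the paper leaves this entirely implicit. So your argument buys self-containedness: it replaces an external citation by a short representation-theoretic computation plus an elementary Diophantine-type analysis, at the cost of a somewhat longer case check; just keep in mind, when asserting that each regime's expression is increasing in $k$, that the parity constraint (even $k\geq 2$ when $j=0$) must be carried along, as you indeed do when extracting $\lambda^{2,0}$ as the minimum.
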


\begin{proof}
From Lemma~\ref{lemma:tanno}, we know that any eigenvalues of $\Delta_t$ are of the form $\lambda^{k,j}(t)$ for some $k,j\in\N\cup\{0\}$, as in \eqref{eq:lambdas}. We want to compute
\begin{eqnarray}\label{eq:l1r}
\lambda_1(t) &=& \min\{ \lambda^{k,j}(t)\in\sigma(\Delta_t): k,j\in\N\cup\{0\}, j\leq k, k^2+j^2\neq0\}\nonumber \\
&=& \min\{r_1(t),r_2(t)\},
\end{eqnarray}
where
\begin{eqnarray*}
r_1(t) &=& \min\{\lambda^{k,0}(t)\in\sigma(\Delta_t): k\in\N\},\\
r_2(t) &=& \min\{\lambda^{k,j}(t)\in\sigma(\Delta_t):k,j\in\N,1\leq j\leq k\}.
\end{eqnarray*}
First, suppose $1\leq j\leq k$, i.e., compute $r_2(t)$. For such $k,j$,
\begin{equation*}
\lambda^{k,j}(t) = k(k+2n)+\left(\tfrac{1}{t^2}-1\right)j^2 = k^2-j^2+2n+\tfrac{j^2}{t^2}\geq 2n+\tfrac{1}{t^2} =\lambda^{1,1}(t).
\end{equation*}
From Lemma~\ref{lemma:tanno2}, $E_1^1$ is non-trivial, hence $\lambda^{1,1}(t)\in\sigma(\Delta_t)$. This proves that
\begin{equation}\label{eq:r2}
r_2(t)=\lambda^{1,1}(t)=2n+\tfrac{1}{t^2}.
\end{equation}

Now, suppose $j=0$. From Lemma~\ref{lemma:tanno}, if $\lambda^{k,j}(t)\in\sigma(\Delta_t)$, then $k\in\N$ must be even. Thus,
\begin{equation*}
\lambda^{k,0}(t) = k(k+2n) \geq2(2+2n) =\lambda^{2,0}(t).
\end{equation*}
From Lemma~\ref{lemma:tanno2}, $E_2^0$ is non-trivial, hence $\lambda^{2,0}(t)\in\sigma(\Delta_t)$. This proves that
\begin{equation}\label{eq:r1}
r_1(t)=\lambda^{2,0}(t)=4(1+n).
\end{equation}

From \eqref{eq:l1r}, \eqref{eq:r2} and \eqref{eq:r1}, we obtain:
\begin{equation*}
\lambda_1(t)=\min\left\{4(1+n),2n+\tfrac{1}{t^2}\right\}=\begin{cases} 4(1+n), & 0<t\leq\tfrac{1}{\sqrt{2(2+n)}} \\ 2n+\tfrac{1}{t^2}, & t\geq\tfrac{1}{\sqrt{2(2+n)}}.\end{cases}
\end{equation*}

To compute the multiplicity of this first eigenvalue, one has to compute dimensions of $E_k^j$. These can be determined using the formula for the dimensions of $E_k$, see \cite{bgm}. For this computation, we refer to~\cite{tanno}.
\end{proof}

\subsection{Local rigidity}
We now prove local rigidity of the family $(S^{2n+1},\g_t)$ at all $t>0,t\neq1$, in the sense of Definition~\ref{def:biflocrig}. Recall this family is \emph{not} locally rigid at $t=1$, which is a known bifurcation value, since the critical points of the Hilbert-Einstein functional restricted to the conformal class $[\g_1]$ of the round metric form a manifold of dimension $2n+2$.

\begin{theorem}\label{thm:s2n+1}
The family $(S^{2n+1},\g_t)$ is locally rigid at all $t>0,t\neq1$.
\end{theorem}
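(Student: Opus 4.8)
The plan is to deduce the statement directly from Proposition~\ref{prop:localrigidity}: it suffices to show that no $t>0$ with $t\neq 1$ is a degeneracy value of $(S^{2n+1},\gt)$. Here $m=2n+1$, so $m-1=2n$, and from \eqref{eq:scalberger} the relevant threshold is
\begin{equation*}
\frac{\scal(\gt)}{m-1}=\frac{2n(2n+2-t^2)}{2n}=2n+2-t^2.
\end{equation*}
Recall that degeneracy at $t$ requires $\scal(\gt)\neq 0$ together with $\frac{\scal(\gt)}{m-1}$ being an eigenvalue of $\Delta_t$; since the only non-positive element of $\sigma(\Delta_t)$ is $0$ (the eigenvalue of the constants), it is enough to rule out $\frac{\scal(\gt)}{m-1}$ being a \emph{positive} eigenvalue.

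My strategy is to compare the threshold $2n+2-t^2$ with the first positive eigenvalue $\lambda_1(t)$ computed in Proposition~\ref{prop:firsteig}, and to establish that $2n+2-t^2<\lambda_1(t)$ for every $t\neq 1$. Granting this, for each $t\neq 1$ either $\scal(\gt)\leq 0$ (so $t$ is trivially not a degeneracy value, as a non-positive threshold is not a positive eigenvalue and vanishing threshold forces $\scal(\gt)=0$), or $0<2n+2-t^2<\lambda_1(t)$, whence the threshold lies strictly below the bottom of the positive spectrum and cannot be an eigenvalue at all. In both cases $t$ fails to be a degeneracy value and local rigidity follows; in fact Corollary~\ref{cor:indzero} then identifies each such $\gt$ as a strict local minimum of the Hilbert-Einstein functional.

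The comparison splits along the two ranges defining $\lambda_1(t)$ in \eqref{eq:lambda1s2n+1}. On $0<t\leq\tfrac{1}{\sqrt{2(2+n)}}$ one has $\lambda_1(t)=4(1+n)$, and the inequality $2n+2-t^2<4(n+1)$ is immediate since $-t^2<2n+2$. The decisive case is $t\geq\tfrac{1}{\sqrt{2(2+n)}}$, where $\lambda_1(t)=2n+\tfrac{1}{t^2}$; here I would rewrite $2n+2-t^2<2n+\tfrac{1}{t^2}$, after clearing the positive factor $t^2$, as $2t^2-t^4-1<0$, i.e. $-(t^2-1)^2<0$, which holds precisely when $t\neq 1$. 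Note that equality is attained at $t=1$, recovering the known fact that $\g_1$ is the degeneracy (and bifurcation) value.

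The step I expect to be decisive rather than merely routine is this final factorization: the entire theorem hinges on the identity
\begin{equation*}
\Big(2n+\tfrac{1}{t^2}\Big)-\big(2n+2-t^2\big)=\tfrac{1}{t^2}\,(t^2-1)^2\geq 0,
\end{equation*}
with equality exactly at $t=1$. This is what guarantees that the threshold $\frac{\scal(\gt)}{m-1}$ never overtakes $\lambda_1(t)$ for $t\neq 1$, so the Morse index $N(\gt)$ stays identically zero away from the round metric; with no change of Morse index, Proposition~\ref{prop:bifmorseindex} produces no bifurcation, consistent with the local rigidity asserted.
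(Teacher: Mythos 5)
Your proof is correct and takes essentially the same route as the paper: comparing the threshold $\tfrac{1}{2n}\scal(\gt)=2n+2-t^2$ against $\lambda_1(t)$ from Proposition~\ref{prop:firsteig} on the two ranges, with the decisive identity $\bigl(2n+\tfrac{1}{t^2}\bigr)-\bigl(2n+2-t^2\bigr)=\tfrac{(t^2-1)^2}{t^2}$, and concluding via Corollary~\ref{cor:indzero}. Your explicit treatment of the case $\scal(\gt)\leq 0$ (large $t$) is a slightly more careful spelling-out of what the paper compresses into ``a simple analysis,'' but the argument is the same.
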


\begin{proof}
Recall formulas \eqref{eq:scalberger} from Proposition~\ref{prop:scalberger} and \eqref{eq:lambda1s2n+1} from Proposition~\ref{prop:firsteig}. A simple analysis shows that for $t\leq\tfrac{1}{\sqrt{2(2+n)}}$, $\tfrac{1}{2n}\scal(S^{2n+1},\g_t)<\lambda_1(t)$. For $t\geq\tfrac{1}{\sqrt{2(2+n)}}$, we also have that
\begin{equation*}
\tfrac{1}{2n}\scal(S^{2n+1},\g_t)=2n+2-t^2<2n+\tfrac{1}{t^2}=\lambda_1(t),
\end{equation*}
unless $t=1$, in which case equality holds. Local rigidity of $(S^{2n+1},\g_t)$ for $t\neq1$ now follows from Corollary~\ref{cor:indzero}.
\end{proof}

\section{\texorpdfstring{Homogeneous spheres $(S^{4n+3},\hs)$}{Homogeneous spheres II}}\label{sec:s4n+3t}

\subsection{Spectrum of the Laplacian}
Analogously to the case before, from \eqref{eq:lambdakj}, all eigenvalues of $\Delta_t$ are of the form
\begin{equation*}
\lambda^{k,j}(t)=k(k+4n+2)+\left(\tfrac{1}{t^2}-1\right)j(j+2),
\end{equation*}
for some $k,j\in\N\cup\{0\}$. The following refinement of the above is due to Tanno~\cite[Prop 3.1]{tanno2}.

\begin{lemma}\label{lemma:tanno4n+3}
For each eigenvalue $\mu_k=k(k+4n+2)$ of $(S^{4n+3},\h_1)$, denote its corresponding eigenspace by $E_k\subset L^2(S^{4n+3},\vol_{\h_1})$. Then $E_k$ has a orthogonal decomposition in simultaneous eigenspaces
\begin{equation*}
E_k=E_k^k+E_k^{k-2}+\dots+E_k^{k-2\left\lfloor k/2\right\rfloor},
\end{equation*}
where $\left\lfloor k/2 \right\rfloor$ denotes the largest integer less than or equal to $k/2$, and for each $\psi\in E_k^j$, $\Delta_v \psi=j(j+2) \psi$. In particular, $\lambda^{k,j}(t)$ can only be an eigenvalue of $\Delta_t$ if $0\leq j\leq k$ and $k-j$ is even, i.e., the eigenvalues of $\Delta_t$ can only be among
\begin{equation}\label{eq:lambdas4n+3}
\lambda^{k,j}(t)=k(k+4n+2)+\left(\tfrac{1}{t^2}-1\right)j(j+2), \quad j=k,k-2,\dots,k-2\left\lfloor k/2 \right\rfloor.
\end{equation}
\end{lemma}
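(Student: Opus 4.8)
The plan is to follow the strategy of the proof of Lemma~\ref{lemma:tanno}, but to replace the one-dimensional geodesic analysis (which gave the sinusoidal matching \eqref{eq:res1}--\eqref{eq:res2}) by a group-orbit argument, since here the fibers $(F,\k)$ are round $3$-spheres rather than circles. First I would produce the orthogonal decomposition abstractly: by Theorem~\ref{thm:bbb} the space $L^2(S^{4n+3},\vol_{\h_1})$ admits a Hilbert basis of simultaneous eigenfunctions of $\Delta_M$ and $\Delta_v$, and those lying in the \emph{finite-dimensional} $\mu_k$-eigenspace $E_k$ of $\Delta_M$ form an orthogonal basis of $E_k$ consisting of $\Delta_v$-eigenfunctions. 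Since $\Delta_v$ acts on every fiber as the Laplacian of the round $S^3$, whose spectrum is $\{j(j+2):j\in\N\cup\{0\}\}$, each resulting $\Delta_v$-eigenvalue on $E_k$ equals some $j(j+2)$; writing $E_k^j$ for the corresponding joint eigenspace already yields an orthogonal decomposition, and it remains only to bound the range of $j$.

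The core step is to realize the fibers as orbits of the free isometric $\Sp(1)$-action on $S^{4n+3}\subset\Hr^{n+1}$ given by right quaternionic multiplication $q\mapsto q\lambda$. Fixing a fiber $F_p$ through a point $q^0\in S^{4n+3}$, I consider the orbit map $\lambda\mapsto q^0\lambda$ for $\lambda\in\Sp(1)\cong S^3$. Because $|q^0\lambda|=|q^0|\,|\lambda|=|\lambda|$, this is the restriction of a linear isometry $\Hr\to\Hr^{n+1}=\R^{4n+4}$, hence an isometry of the round $S^3$ onto $F_p$ intertwining $\Delta_{S^3}$ with $\Delta_v$. Now take $\psi\in E_k$, which by the standard theory of spherical harmonics is the restriction to $S^{4n+3}$ of a harmonic homogeneous polynomial $P$ of degree $k$ on $\R^{4n+4}$, see~\cite{bgm}. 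Since $\lambda\mapsto q^0\lambda$ is linear and $P$ is homogeneous of degree $k$, the fiber restriction of $\psi$, read through the orbit map as $\lambda\mapsto P(q^0\lambda)$, is a homogeneous polynomial of degree $k$ in the four real coordinates of $\lambda\in\R^4$.

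The remaining ingredient is the classical fact that the restriction to $S^3$ of a homogeneous degree-$k$ polynomial on $\R^4$ is a sum of spherical harmonics of degrees $k,k-2,\dots$ (down to $0$ or $1$). Consequently, on every fiber the restriction of $\psi$ only involves $\Delta_{S^3}$-eigenvalues $j(j+2)$ with $0\le j\le k$ and $k-j$ even, so any nonzero $\Delta_v$-eigenfunction in $E_k$ has eigenvalue $j(j+2)$ with $j$ in this range. This establishes $E_k=E_k^{k}+E_k^{k-2}+\cdots+E_k^{k-2\lfloor k/2\rfloor}$, and the ``in particular'' statement then follows from Corollary~\ref{cor:specincl} together with $\phi_j=j(j+2)$: the value $\lambda^{k,j}(t)$ can be an eigenvalue of $\Delta_t$ only when $E_k^j\neq0$, forcing $0\le j\le k$ with $k-j$ even. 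I expect the main obstacle, relative to the circle case, to be precisely that one \emph{cannot} reduce to an ODE along a single Hopf geodesic, since on $S^3$ the number $\sqrt{j(j+2)}$ is irrational and the matching argument of \eqref{eq:res1}--\eqref{eq:res2} is no longer available; the point that replaces it is the quaternionic homogeneity of the orbit map, which converts the fiberwise restriction into a genuine homogeneous polynomial on $S^3$ and thereby reduces the claim to the parity of degrees in the harmonic decomposition on $S^3$.
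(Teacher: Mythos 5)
Your proposal is correct and follows essentially the same route as the paper's proof: the paper also argues that a simultaneous eigenfunction in $E_k$ is the restriction of a harmonic homogeneous polynomial of degree $k$ on $\R^{4n+4}$, and that its restriction to a Hopf fiber (a great $3$-sphere) can therefore only involve spherical-harmonic degrees $k,k-2,\dots$, which forces $\Delta_v$-eigenvalues $j(j+2)$ with $0\le j\le k$ and $k-j$ even. The paper states this tersely as ``totally analogous'' to Lemma~\ref{lemma:tanno}; your explicit use of the linear orbit map $\lambda\mapsto q^0\lambda$ of the $\Sp(1)$-action supplies precisely the detail that makes the fiber restriction a genuine degree-$k$ homogeneous polynomial on $\R^4$, so it is a correct and welcome elaboration of the same argument rather than a different one.
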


\begin{proof}
The proof is totally analogous to that of Lemma~\ref{lemma:tanno}. Namely, one observes that $\Delta_v=-\sum_{l=1}^3V_lV_l$, where $\{V_l,l=1,2,3\}$ are unit vectors that span the vertical subbundle, i.e., the distribution tangent to the Hopf fibers. Then, a simultaneous eigenfunction $\psi\in E_k^j$ must be the restriction of a harmonic homogeneous polynomial of degree $k$ to $S^{4n+3}\subset\R^{4n+4}$ and hence the degree of the restriction to the Hopf fibers must be one of $k,k-2,\dots,k-2\left\lfloor k/2 \right\rfloor$.
\end{proof}

Similarly to the previous case, we have the following.

\begin{lemma}\label{lemma:tanno24n+3}
The following $E_k^j$ are non-trivial:
\begin{itemize}
\item[(i)] $E_k^k$, for any $k$;
\item[(ii)] $E_k^0$ if $k$ is even.
\end{itemize}
\end{lemma}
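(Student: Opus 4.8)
The plan is to mimic the explicit construction of simultaneous eigenfunctions via spherical harmonics that underlies Lemma~\ref{lemma:tanno2}, now carried out over the quaternions. I identify $\R^{4n+4}\cong\Hr^{n+1}$ with coordinates $q=(q_0,\dots,q_n)$, so that the Hopf fibers of $S^3\to S^{4n+3}\to\Hr P^n$ are the orbits $\{q\lambda:\lambda\in\Sp(1)\}$ of scalar multiplication by unit quaternions. By Lemma~\ref{lemma:tanno4n+3}, a function lies in $E_k^j$ precisely when it is the restriction to $S^{4n+3}$ of a degree-$k$ harmonic homogeneous polynomial on $\R^{4n+4}$ whose restriction to each fiber is a degree-$j$ spherical harmonic on the round $S^3$ (so that $\Delta_v\psi=j(j+2)\psi$). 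Hence it suffices to exhibit, for (i) every $k$ and for (ii) every even $k$, a nonzero such polynomial whose fiberwise degree equals $k$, respectively $0$. I do not address the vanishing statements, since the admissible parities $j\in\{k,k-2,\dots\}$ (in particular the necessity that $k$ be even when $j=0$) are exactly the content of Lemma~\ref{lemma:tanno4n+3}.

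For (i), I take $\psi=P(q_0)$, where $P$ is any nonzero degree-$k$ harmonic homogeneous polynomial on $\Hr\cong\R^4$ (for instance $P=z_1^k$ for a complex coordinate $z_1$ of $q_0\in\Hr\cong\C^2$). Regarded on $\Hr^{n+1}$ it is harmonic and homogeneous of degree $k$, hence restricts to an element of $E_k$. Along the fiber through $q$ it reads $\lambda\mapsto P(q_0\lambda)$; since left multiplication $\lambda\mapsto q_0\lambda$ is a conformal linear map of $\R^4$ (a scaling by $|q_0|$ composed with a rotation) whenever $q_0\neq0$, and harmonicity and homogeneity degree are preserved under conformal linear maps, this fiber restriction is a degree-$k$ harmonic polynomial in $\lambda$. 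Its restriction to the unit sphere is therefore a degree-$k$ spherical harmonic, so $\Delta_v\psi=k(k+2)\psi$, i.e.\ $\psi\in E_k^k$; and it is clearly nonzero. This establishes (i).

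For (ii), I use fiberwise constancy to force $j=0$. The key observation is that $|q_0|^2=q_0\bar q_0$ is constant along fibers, since under $q_0\mapsto q_0\lambda$ one has $q_0\bar q_0\mapsto q_0\lambda\bar\lambda\bar q_0=q_0\bar q_0$ (as $\lambda\bar\lambda=1$). For $k=2m$ even I decompose the fiberwise-constant homogeneous polynomial $|q_0|^{2m}$ into harmonics, $|q_0|^{2m}=h_{2m}+|q|^2h_{2m-2}+\cdots$, with each $h_{2m-2i}$ harmonic. Because this decomposition commutes with the orthogonal action $q\mapsto q\lambda$ (which fixes $|q|^2$), uniqueness forces every $h_{2m-2i}$ to be fiberwise constant; in particular the leading term $h_{2m}$ is a fiberwise-constant harmonic polynomial of degree $2m$, so its restriction lies in $E_{2m}^0$. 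It remains to check $h_{2m}\neq0$, i.e.\ that $|q|^2$ does not divide $|q_0|^{2m}$: upon complexifying, $|q|^2$ and $|q_0|^2$ become the forms $\sum_{i=1}^{4n+4}z_i^2$ and $\sum_{i=1}^{4}z_i^2$, and for $n\geq1$ there is a point on the quadric $\{\sum z_i^2=0\}$ with $\sum_{i=1}^4 z_i^2=1$, so $|q_0|^{2m}$ is not a multiple of $|q|^2$. (The base case is transparent: $h_2=|q_0|^2-\tfrac1{n+1}|q|^2$ is harmonic and fiberwise constant.) This proves (ii).

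The only delicate point is the simultaneous nonvanishing and fiberwise constancy of $h_{2m}$ in (ii), so I expect that to be the main obstacle; everything else is a direct transcription of the complex case. As an independent check, one can bypass the polynomial argument: fiberwise-constant functions on $S^{4n+3}$ are pullbacks of functions on $\Hr P^n$, on which $\Delta_M=\Delta_h$ descends to $\Delta_{\Hr P^n}$, so $\bigoplus_{m\geq0}E_{2m}^0$ is precisely the pullback of $L^2(\Hr P^n)$ with eigenvalues $\mu_{2m}=2m(2m+4n+2)=4m(m+2n+1)$; since these are exactly the eigenvalues of the standard $\Hr P^n$, each $E_{2m}^0$ is nonzero.
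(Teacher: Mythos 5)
Your proof is correct, and it is essentially the approach the paper itself relies on: the paper states this lemma without proof (``similarly to the previous case''), where the previous case, Lemma~\ref{lemma:tanno2}, is attributed to Tanno's explicit construction of elements of $E_k^j$ using spherical harmonics --- and your quaternionic constructions ($P(q_0)$ with $P$ a degree-$k$ harmonic polynomial on $\Hr\cong\R^4$ for $E_k^k$, and the harmonic part of $|q_0|^{2m}$ for $E_{2m}^0$) are exactly that argument carried out in detail. Your closing independent check, identifying $E_{2q}^0$ with the pullback of the $q$-th eigenspace of the base $\Hr P^n$, is likewise the identification the paper itself invokes later, in the proof of Proposition~\ref{prop:morseindexh}.
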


As a consequence, we may compute the the first eigenvalue of $(S^{4n+3},\h_t)$, by extracting the least possible $\lambda^{k,j}(t)$ among \eqref{eq:lambdas4n+3} knowing that some of them occur as eigenvalues of $\Delta_t$ for the above $k,j\in\N\cup\{0\}$ such that $E_k^j$ are non-trivial.

\begin{proposition}\label{prop:firsteig4n+3}
The first non-zero eigenvalue of the Laplacian of $(S^{4n+3},\h_t)$ is
\begin{equation*}
\lambda_1(t)=\begin{cases} 8(1+n), & 0<t\leq\tfrac{1}{2\sqrt{2+n}} \\ 4n+\tfrac{3}{t^2}, & t\geq\tfrac{1}{2\sqrt{2+n}} \end{cases}
\end{equation*}
with multiplicity given by
\begin{equation*}
m_1(t)=\begin{cases} n(2n+3), & 0<t<\tfrac{1}{2\sqrt{2+n}} \\
2n^2+7n+4, & t=\tfrac{1}{2\sqrt{2+n}} \\
4(n+1), & t>\tfrac{1}{2\sqrt{2+n}}.\end{cases}
\end{equation*}
\end{proposition}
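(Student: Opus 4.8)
The plan is to imitate the proof of Proposition~\ref{prop:firsteig} almost verbatim, the only changes being the replacement of the base eigenvalues $k(k+2n)$ by $k(k+4n+2)$ and of the fiber eigenvalues $j^2$ by $j(j+2)$ (the spectrum of the round $S^3$ fiber). Using Corollary~\ref{cor:specincl} together with Lemma~\ref{lemma:tanno4n+3}, every nonzero eigenvalue of $\Delta_t$ is of the form $\lambda^{k,j}(t)$ in \eqref{eq:lambdas4n+3} with $0\le j\le k$ and $k-j$ even. I would split the minimization as $\lambda_1(t)=\min\{r_1(t),r_2(t)\}$, where $r_1(t)$ ranges over the admissible eigenvalues with $j=0$ and $r_2(t)$ over those with $1\le j\le k$; by Lemma~\ref{lemma:tanno24n+3} the spaces $E_1^1$ and $E_2^0$ are nontrivial, which will guarantee that the candidate minima actually lie in $\sigma(\Delta_t)$.

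For $r_2(t)$, rewrite for $1\le j\le k$
\[
\lambda^{k,j}(t)=\bigl(k(k+4n+2)-j(j+2)\bigr)+\frac{j(j+2)}{t^2}.
\]
The inequality $j\le k$ gives $j(j+2)\le k(k+2)$, so the $t$-independent bracket is at least $k(k+4n+2)-k(k+2)=4nk\ge 4n$; and $j\ge 1$ gives $j(j+2)\ge 3$, so the remaining term is at least $3/t^2$. Hence $\lambda^{k,j}(t)\ge 4n+\tfrac{3}{t^2}$ for every admissible pair, with equality at $(k,j)=(1,1)$; since $E_1^1$ is nontrivial, $r_2(t)=\lambda^{1,1}(t)=4n+\tfrac{3}{t^2}$. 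For $r_1(t)$, the index $j=0$ forces $k$ even, and since $\lambda^{k,0}(t)=k(k+4n+2)$ is increasing in $k$, the minimum over even $k\ge 2$ is attained at $k=2$, giving $\lambda^{2,0}(t)=8(n+1)$, which belongs to $\sigma(\Delta_t)$ because $E_2^0$ is nontrivial.

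Combining these, $\lambda_1(t)=\min\{8(n+1),\,4n+\tfrac{3}{t^2}\}$; the threshold is the unique $t$ at which the two branches agree (which continuity of $\lambda_1$ forces), obtained by solving $8(n+1)=4n+\tfrac{3}{t^2}$, with the constant branch dominating for smaller $t$ and the branch $4n+\tfrac{3}{t^2}$ for larger $t$. This settles the value of $\lambda_1(t)$; the eigenvalue comparison above is entirely elementary, and I expect no real difficulty there.

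The genuinely delicate part, and the main obstacle, is the multiplicity $m_1(t)$, since this requires the exact dimensions of the eigenspaces $E_k^j$ and not merely their nonvanishing. For $t$ above the threshold one needs $\dim E_1^1=\dim E_1=4(n+1)$, the dimension of the space of degree-one spherical harmonics on $S^{4n+3}$; for $t$ below one needs $\dim E_2^0=n(2n+3)$, which is the multiplicity of the first eigenvalue of the base $\Hr P^n$ (the elements of $E_2$ that are constant along the fibers); and exactly at the threshold both eigenspaces contribute, giving $m_1(t)=\dim E_2^0+\dim E_1^1=2n^2+7n+4$. As in the unitary case, I would obtain these dimension counts from the standard dimension formula for spherical harmonics together with Tanno's explicit description of the $E_k^j$ in~\cite{tanno2}, which is where the real computational effort lies.
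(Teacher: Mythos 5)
Your eigenvalue comparison is exactly the argument the paper intends: the paper gives no separate proof of this proposition, saying only that one extracts the least admissible $\lambda^{k,j}(t)$ among \eqref{eq:lambdas4n+3} using Lemmas~\ref{lemma:tanno4n+3} and~\ref{lemma:tanno24n+3}, i.e.\ the proof of Proposition~\ref{prop:firsteig} repeated with $\mu_k=k(k+4n+2)$ and $\phi_j=j(j+2)$. Your bounds $\lambda^{k,j}(t)\geq 4n+\tfrac{3}{t^2}$ for $1\leq j\leq k$ (equality only at $k=j=1$) and $\lambda^{k,0}(t)\geq 8(n+1)$ for even $k\geq 2$ (equality only at $k=2$), combined with the non-triviality of $E_1^1$ and $E_2^0$, correctly give $\lambda_1(t)=\min\left\{8(n+1),\,4n+\tfrac{3}{t^2}\right\}$; and your plan for the multiplicities (computing $\dim E_1^1=\dim E_1=4(n+1)$ and $\dim E_2^0=m_1(\Hr P^n)=n(2n+3)$ from the spherical harmonics dimension formulas and Tanno's description) is also what the paper does, deferring to \cite{tanno2} and \cite{bgm}.

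The gap is precisely the one step you declined to carry out: solving $8(n+1)=4n+\tfrac{3}{t^2}$. The solution is $t^2=\tfrac{3}{4(n+2)}$, i.e.\ $t=\tfrac{\sqrt{3}}{2\sqrt{2+n}}$, which is \emph{not} the threshold $\tfrac{1}{2\sqrt{2+n}}$ appearing in the statement you were asked to prove. In fact, at $t=\tfrac{1}{2\sqrt{2+n}}$ one has $4n+\tfrac{3}{t^2}=16n+24\neq 8n+8$, so the two branches of the stated formula do not even agree at the stated switching point: the threshold as printed is internally inconsistent (the factor $\sqrt{3}$ coming from the first fiber eigenvalue $\phi_1=3$ was dropped; compare Propositions~\ref{prop:firsteig} and~\ref{prop:firsteig15}, whose thresholds $\tfrac{1}{\sqrt{2(2+n)}}$ and $\sqrt{7/24}$ do solve the corresponding crossing equations). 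So your argument, once completed, proves the proposition with the corrected threshold $\tfrac{\sqrt{3}}{2\sqrt{2+n}}$ — and the multiplicity values you list are consistent with that corrected value — but as written your proof asserts that ``the threshold is the unique $t$ at which the two branches agree'' while leaving unverified (because it is false) that this $t$ equals the value in the statement. You should have solved the equation and flagged the discrepancy explicitly.
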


\subsection{Bifurcation and local rigidity}
Equipped with expressions for the scalar curvature and eigenvalues of the Laplacian of $(S^{4n+3},\h_t)$ we now completely determine the set of values $t>0$ where this family is locally rigid and prove existence of bifurcation in its complementary. In fact, all degeneracy values will be bifurcation values.

\begin{proposition}\label{prop:morseindexh}
The degeneracy values for $\hs$ form a sequence $\{t_q^{\h}\}$, with $t_q^{\h}\to 0$ as $q\to\infty$, given by $t_0^{\h}=1$ and for $q>0$,
\begin{multline}\label{eq:tqh}
t_q^{\h}=\left[\frac23(n-2qn-2q+2)-\frac{q}{3}\left(2q+\frac1n+\frac{q}{n}\right) \right.\\
+\left.\frac{\sqrt{18n+4(4qn^2-2n^2+2q^2n+4qn-4n+q^2+q)^2}}{6n}\right]^{\tfrac12}.
\end{multline}
The Morse index of $\hs$ is given by
\begin{equation}\label{eq:morseh}
N(\hs)=\begin{cases} \sum_{q=1}^r m_q(\Hr P^n), & t^{\h}_{r+1}\leq t<t^{\h}_r\\
0, & t\geq t^{\h}_1,
\end{cases}
\end{equation}
where $m_q(\Hr P^n)$ is the multiplicity of the $q^{th}$ eigenvalue of $\Hr P^n$, see \cite{bgm}. In particular, it changes whenever $t$ crosses a degeneracy value $t_q^{\h}$, is positive for $t<t_1^{\h}$ and gets arbitrarily large for small $t>0$.
\end{proposition}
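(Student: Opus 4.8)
\section*{Proof proposal}

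The plan is to convert the degeneracy condition ``$\tfrac{\scal(\hs)}{4n+2}$ is an eigenvalue of $\Delta_t$'' into an explicit comparison between the threshold $\tau(t):=\tfrac{\scal(\hs)}{4n+2}$, obtained from \eqref{eq:scal4n+3t}, and the candidate eigenvalues $\lambda^{k,j}(t)$ of \eqref{eq:lambdas4n+3}, and then to show that for $t<1$ the only eigenvalues that $\tau(t)$ can meet or fall below are the fiber-constant ones $\lambda^{2q,0}(t)=4q(q+2n+1)$, which are constant in $t$. Setting $u=1/t^2$, I would first record the identity $\lambda^{1,1}(t)-\tau(t)=\tfrac{6n}{2n+1}\,\tfrac{(u-1)^2}{u}$, so that $\lambda^{1,1}(t)\ge\tau(t)$ with equality exactly at $t=1$. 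The key observation, valid for $t\le 1$ (i.e. $u\ge 1$), is that among all admissible pairs with $j\ge 1$ the value $\lambda^{k,j}(t)=k(k+4n+2)+(u-1)\,j(j+2)$ is minimized at $k=j=1$, since both $k(k+4n+2)-(4n+3)$ and $(u-1)\big(j(j+2)-3\big)$ are nonnegative. Combining these gives $\lambda^{k,j}(t)\ge\lambda^{1,1}(t)\ge\tau(t)$ for every $j\ge 1$ and every $t\le 1$, strictly for $t<1$.

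This comparison is the heart of the argument: for $t<1$ no eigenfunction with nontrivial vertical component ever contributes to the Morse index, so $\tau(t)$ can cross only the fiber-constant eigenvalues. By Lemma~\ref{lemma:tanno4n+3} and Lemma~\ref{lemma:tanno24n+3} these are exactly the $\lambda^{2q,0}=4q(q+2n+1)$, $q\ge 0$, pulled back from $\Hr P^n$, with multiplicities $\dim E_{2q}^0=m_q(\Hr P^n)$ (see~\cite{bgm}). Hence for $t<1$ the Morse index $N(\hs)$ equals the number, with multiplicity, of positive base eigenvalues strictly below $\tau(t)$, and a degeneracy value $t_*<1$ is precisely a solution of $\tau(t)=4q(q+2n+1)$ for some $q\ge 1$. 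At $t=1$ the identity above shows the only degeneracy is $\tau(1)=\lambda^{1,1}(1)=4n+3$, the round, conformally induced one, giving $t_0^{\h}=1$; for $t>1$ the same identity yields $\tau(t)<\lambda^{1,1}(t)=\lambda_1(t)$ and $\tau(t)$ below every positive eigenvalue, so there is no degeneracy there and $N(\hs)=0$.

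It then remains to solve $\tau(t)=4q(q+2n+1)$ and to extract the ordering. Clearing denominators turns this into the quadratic $6n\,x^2-A\,x-3=0$ in $x=t^2$, with $A=8n(n+2)-4q(q+2n+1)(2n+1)$; since the product of roots is $-\tfrac{1}{2n}<0$, there is a unique positive root $x=\tfrac{A+\sqrt{A^2+72n}}{12n}$, and a short computation shows $A=-4B$, where $B$ is the polynomial under the radical in~\eqref{eq:tqh}, so that $A^2+72n=16B^2+72n$ and this root is precisely~\eqref{eq:tqh}. Because $\tau$ is strictly decreasing in $t$ with range $]4n+3,+\infty[$ on $]0,1[$, while the base eigenvalues $4q(q+2n+1)$ increase strictly to $+\infty$, each $q\ge 1$ yields a unique $t_q^{\h}\in\,]0,1[$, the sequence is strictly decreasing, and $t_q^{\h}\to 0$. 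As $t$ decreases through $t_q^{\h}$ the threshold crosses $\lambda^{2q,0}$ from below, so $N(\hs)$ jumps by exactly $m_q(\Hr P^n)$; summing these jumps gives \eqref{eq:morseh}, the index is positive for $t<t_1^{\h}$ since $m_1(\Hr P^n)\ge 1$, and it is unbounded as $t\to 0$ because $\sum_q m_q(\Hr P^n)$ diverges.

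I expect the main obstacle to be the comparison step $\lambda^{k,j}(t)\ge\tau(t)$ for $j\ge 1$ and $t<1$: this is exactly the assertion that no ``compensating'' vertical eigenvalue dips below the threshold (the phenomenon the authors flag as generic), and it is what forces the index to be a clean sum of base multiplicities. The reduction to the single eigenvalue $\lambda^{1,1}$ makes this step elementary here, but it is the conceptual crux; the quadratic solution matching \eqref{eq:tqh} and the monotonicity and jump-counting are then routine.
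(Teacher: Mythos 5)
Your proposal is correct, and it follows the same skeleton as the paper's proof: rule out degeneracies coming from eigenfunctions with $j\ge 1$ except at the round metric, identify the remaining degeneracies with solutions of $\tau(t)=\lambda^{2q,0}=4q(q+2n+1)$, where $\tau(t)=\tfrac{1}{4n+2}\scal(\hs)$, solve a quadratic in $x=t^2$ to obtain \eqref{eq:tqh}, and compute the index via $\dim E_{2q}^0=m_q(\Hr P^n)$. Where you genuinely differ is in the execution of the crucial comparison step. The paper fixes each pair $(k,j)$ with $1\le j\le k$ and studies $\varphi_{n,k,j}(t)=\tau(t)-\lambda^{k,j}(t)$ by one-variable calculus: both limits at $0$ and $+\infty$ are $-\infty$, there is a unique critical point $c_{n,k,j}$, hence a global maximum, and one verifies $\varphi_{n,k,j}(c_{n,k,j})\le 0$ with equality iff $k=j=1$; this treats all $t>0$ uniformly and is self-contained. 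You instead prove the exact identity $\lambda^{1,1}(t)-\tau(t)=\tfrac{6n}{2n+1}\,\tfrac{(u-1)^2}{u}$, $u=1/t^2$ (which I checked is correct), and reduce every $j\ge1$ to $(k,j)=(1,1)$ by the termwise bound $k(k+4n+2)\ge 4n+3$, $(u-1)\bigl(j(j+2)-3\bigr)\ge 0$; since the latter needs $u\ge1$, you must split off the regime $t>1$ and settle it by invoking Proposition~\ref{prop:firsteig4n+3} to get $\tau(t)<\lambda^{1,1}(t)=\lambda_1(t)$ there. Your route is more elementary and makes the equality case at $t=1$ completely transparent, whereas the paper's per-pair maximum computation is left as ``one can then verify'' and is messier; the trade-off is the case split and the reliance on the first-eigenvalue proposition, which the paper states without proof (and whose justification for $t>1$ requires an argument of a similar flavor), while the paper's argument for this proposition needs neither. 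Your derivation of \eqref{eq:tqh} -- the quadratic $6nx^2-Ax-3=0$ with $A=8n(n+2)-4q(q+2n+1)(2n+1)$, uniqueness of the positive root from the negative product of roots, and the identification $A=-4B$ with $B$ the polynomial under the radical -- checks out and is in fact more detailed than the paper's ``easily verified''; the monotonicity of $\tau$, the ordering $t^{\h}_{q+1}<t^{\h}_q\to 0$, and the jump-by-$m_q(\Hr P^n)$ count coincide with the paper's reasoning (both, equally, lean on $\dim E_{2q}^0=m_q(\Hr P^n)$). One minor definitional point: for $t>1$ beyond the zero of $\scal(\hs)$ one has $\tau(t)\le 0$, and $\tau=0$ is an eigenvalue (the constants); this is excluded as a degeneracy only because Definition~\ref{def:biflocrig} requires $\scal\neq 0$, so your conclusion for $t>1$ stands as stated.
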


\begin{proof}
By using elementary calculus techniques and comparing \eqref{eq:scal4n+3t} and \eqref{eq:lambdas4n+3}, we conclude that $\tfrac{1}{4n+2}\scal(S^{4n+3},\h_t)$ can be equal to $\lambda^{k,j}(t)$ if and only if $j=0$, otherwise $\tfrac{1}{4n+2}\scal(S^{4n+3},\h_t)<\lambda^{k,j}(t)$ for any $1\leq j\leq k$ except when $k=j=1$ and $t=1$, as shown in Figure~\ref{fig:bifinst}. Let us give an idea of how to verify these facts.

\begin{figure*}[htf]
\centering
\includegraphics[width=0.6\textwidth]{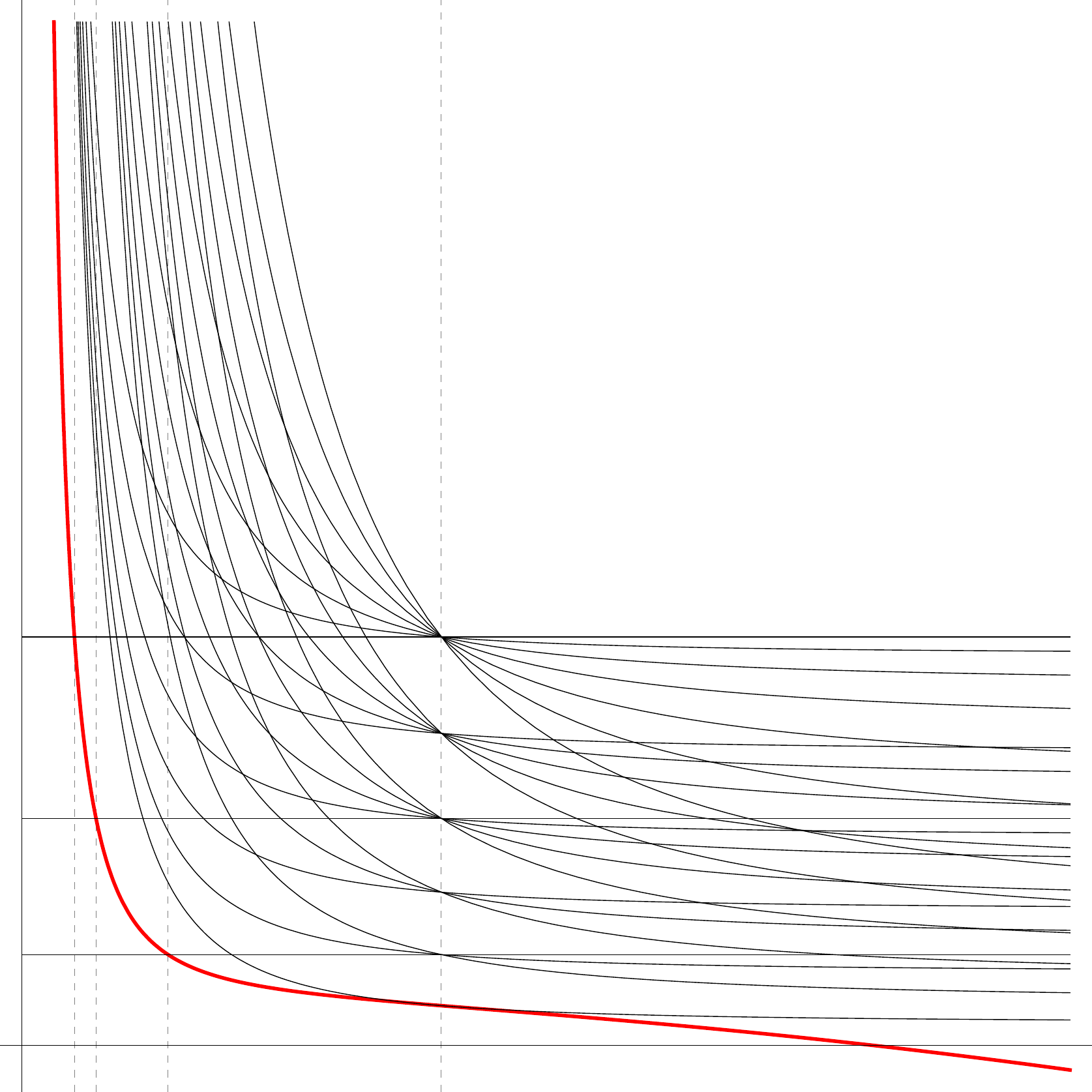}
\vspace{.3cm}
\begin{pgfpicture}
\pgfputat{\pgfxy(-.75,-0.1)}{\pgfbox[center,center]{\color{red}{$s(t)$}}}
\pgfputat{\pgfxy(-4.65,-0.3)}{\pgfbox[center,center]{$t_1^{\h}=1$}}
\pgfputat{\pgfxy(-6.5,-0.3)}{\pgfbox[center,center]{$t_2^{\h}$}}
\pgfputat{\pgfxy(-7.25,-0.3)}{\pgfbox[center,center]{$\dots t_3^{\h}$}}
\pgfputat{\pgfxy(0.1,0.345)}{\pgfbox[center,center]{$t$}}
\pgfputat{\pgfxy(0.4,1.7)}{\pgfbox[center,center]{$\lambda^{k,j}(t)$}}
\end{pgfpicture}
\caption{Graphs of $s(t)=\tfrac{1}{4n+2}\scal(S^{4n+3},\h_t)$ in red and $\lambda^{k,j}(t)$ in black, for $0\leq j\leq k\leq 6$. The dashed vertical lines are over the first degeneracy values (which are all bifurcation values $t_q^{\h}$).}\label{fig:bifinst}
\vspace{-.5cm}
\end{figure*}

Define $\varphi_{n,k,j}\colon \R^+\to\R$ by
\begin{equation*}
\varphi_{n,k,j}(t):=\tfrac{1}{4n+2}\scal(S^{4n+3},\h_t)-\lambda^{k,j}(t), \quad t>0.
\end{equation*}
Consider first the case $1\leq j\leq k$, and note that since $\tfrac{3}{2n+1}<1<j(j+2)$, we have
\begin{equation}\label{eq:lims}
\lim_{t\to0}\varphi_{n,k,j}(t)=-\infty, \quad\quad \lim_{t\to+\infty}\varphi_{n,k,j}(t)=-\infty.
\end{equation}
Moreover, differentiating in $t$ we get
\begin{equation*}
\tfrac{\dd}{\dd t}\varphi_{n,k,j}(t)=\tfrac{1}{t^3}\left(j^2+2j-\tfrac{6}{2n+1}\right)-12nt,
\end{equation*}
so that the function $\varphi_{n,k,j}(t)$ has a unique critical point, namely when $t$ is equal to
\begin{equation*}
c_{n,k,j}=\sqrt[4]{\tfrac{1}{12n}\left(j^2+2j-\tfrac{6}{2n+1}\right)},
\end{equation*}
which in face of \eqref{eq:lims} must be a global maximum for $\varphi_{n,k,j}$. One can then verify that $\varphi_{n,k,j}(c_{n,k,j})\leq0$ and equality holds if and only if $k=j=1$, in which case $c_{n,1,1}=1$. This proves that $\varphi_{n,k,j}(t)\leq0$, i.e., $\tfrac{1}{4n+2}\scal(S^{4n+3},\h_t)\leq\lambda^{k,j}(t)$, for any $1\leq j\leq k$ and equality holds if and only if $k=j=1$, at $t=1$.

Now, let us consider the case $j=0$. In this case,
\begin{equation}\label{eq:lims2}
\lim_{t\to0}\varphi_{n,k,0}(t)=+\infty, \quad\quad \lim_{t\to+\infty}\varphi_{n,k,0}(t)=-\infty
\end{equation}
and the function has negative derivative everywhere, 
\begin{equation*}
\tfrac{\dd}{\dd t}\varphi_{n,k,0}(t)=-\tfrac{6}{2n+1}\tfrac{1}{t^3}-12nt<0.
\end{equation*}
Hence there exists a unique zero of the function $\varphi_{n,k,0}$, which corresponds to a unique $t$ where $\tfrac{1}{4n+2}\scal(S^{4n+3},\h_t)=\lambda^{k,0}(t)$.

Thus, as claimed above, $\tfrac{1}{4n+2}\scal(S^{4n+3},\h_t)$ can be equal to $\lambda^{k,j}(t)$ if and only if $j=0$, and this equality occurs only for one value of $t$, namely the unique zero of $\varphi_{n,k,0}$. If $1\leq j\leq k$, then $\tfrac{1}{4n+2}\scal(S^{4n+3},\h_t)<\lambda^{k,j}(t)$, except when $k=j=1$ and $t=1$, where equality holds.

From Lemmas~\ref{lemma:tanno4n+3} and~\ref{lemma:tanno24n+3}, we know that $\lambda^{k,0}(t)$ is an eigenvalue of $\Delta_t$ if and only if $k$ is even. Setting $k=2q$ for $q\in\N$, we obtain all the degeneracy values of $\hs$ in addition to $t_0^{\h}=1$ as the values $t_q^{\h}$ where $\tfrac{1}{4n+2}\scal(S^{4n+3},\h_t)$ is equal to $\lambda^{2q,0}(t)$, i.e., the values where $\varphi_{n,2q,0}$ attains its only zero. The explicit formula for such values $t_q^{\h}$ is easily verified to be \eqref{eq:tqh}.

Finally, we compute the Morse index of $\hs$. If $t\geq t_1^{\h}$, then there are no positive eigenvalues of $\Delta_t$ that are less than $\tfrac{1}{4n+2}\scal(S^{4n+3},\h_t)$, so the Morse index is zero. When letting $t\to0$, whenever $t$ crosses a degeneracy value $t_q^{\h}$, the eigenvalue $\lambda^{2q,0}(t)$ becomes smaller than $\tfrac{1}{4n+2}\scal(S^{4n+3},\h_t)$. Therefore, the Morse index $N(\hs)$ increases by the multiplicity of $\lambda^{2q,0}(t)$, which is the dimension of the corresponding eigenspace $E_{2q}^0$. Recall this space is non-trivial by Lemma~\ref{lemma:tanno24n+3}. In fact, it coincides with the space of functions on $S^{4n+3}$ obtained by extending the eigenfunctions corresponding to the $q^{th}$ eigenvalue of the Laplacian on the base $\Hr P^n$ to be constant along the fibers. Thus, $\dim E_{2q}^0$ also coincides with the multiplicity $m_q(\Hr P^n)$ of the $q^{th}$ eigenvalue of the quaternionic projective space $\Hr P^n$, concluding the proof.
\end{proof}

\begin{theorem}\label{thm:s4n+3}
There exists a sequence $\{t_q^{\h}\}$ given by \eqref{eq:tqh}, with $t_q^{\h}\to 0$ as $q\to\infty$, of bifurcation values for $(S^{4n+3},\h_t)$. The family is locally rigid at any $t\not\in\{t_q^{\h}\}$.
\end{theorem}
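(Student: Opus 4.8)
The plan is to apply the two abstract criteria from Section~\ref{sec:var}---Proposition~\ref{prop:bifmorseindex} for bifurcation and Proposition~\ref{prop:localrigidity} for local rigidity---using the complete description of the degeneracy values and Morse index already established in Proposition~\ref{prop:morseindexh}. Indeed, essentially all the analytic work is done there: the degeneracy values are exactly the sequence $\{t_q^{\h}\}$ together with $t_0^{\h}=1$, and the Morse index $N(\hs)$ is the step function \eqref{eq:morseh} that jumps by $m_q(\Hr P^n)>0$ each time $t$ decreases past $t_q^{\h}$.

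First I would establish local rigidity. By definition, the degeneracy values of $\hs$ are precisely the points $t$ where $\tfrac{\scal(\hs)}{4n+2}$ is an eigenvalue of $\Delta_t$, and Proposition~\ref{prop:morseindexh} identifies these as exactly $\{t_q^{\h}\}$. Hence for any $t\not\in\{t_q^{\h}\}$, the value $t$ is \emph{not} a degeneracy value, and Proposition~\ref{prop:localrigidity} immediately gives local rigidity of the family at such $t$. This disposes of the second assertion of the theorem.

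Next I would prove that each $t_q^{\h}$ is a bifurcation value. Fix $q\geq1$ (the case $t_0^{\h}=1$ being the known trivial bifurcation at the round metric). Since the sequence $\{t_q^{\h}\}$ is strictly decreasing and converges to $0$, I can choose $a,b$ with $t_{q+1}^{\h}<a<t_q^{\h}<b<t_{q-1}^{\h}$, so that $a$ and $b$ are not degeneracy values and the interval $]a,b[$ contains the single degeneracy value $t_q^{\h}$. Evaluating the Morse index formula \eqref{eq:morseh} on the two subintervals gives $N(\h_b)=\sum_{i=1}^{q-1}m_i(\Hr P^n)$ and $N(\h_a)=\sum_{i=1}^{q}m_i(\Hr P^n)$, so that $N(\h_a)-N(\h_b)=m_q(\Hr P^n)>0$. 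Thus $N(\h_a)\neq N(\h_b)$, and Proposition~\ref{prop:bifmorseindex} yields a bifurcation value in $]a,b[$; since $t_q^{\h}$ is the only degeneracy value there, and a bifurcation value must be a degeneracy value, that bifurcation value is $t_q^{\h}$ itself.

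I do not anticipate a genuine obstacle here, as the theorem is essentially a bookkeeping consequence of Proposition~\ref{prop:morseindexh}; the only point requiring care is the correct extraction of separating endpoints $a,b$ around each $t_q^{\h}$, which relies on the established facts that $\{t_q^{\h}\}$ is strictly monotone and accumulates only at $0$ (so that each $t_q^{\h}$ is isolated among degeneracy values), together with the standing observation from Proposition~\ref{prop:localrigidity} that a necessary condition for bifurcation is being a degeneracy value. The latter is what lets one conclude that the abstract bifurcation value furnished by Proposition~\ref{prop:bifmorseindex} coincides with $t_q^{\h}$ rather than some spurious point of $]a,b[$.
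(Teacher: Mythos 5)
Your proposal is correct and follows essentially the same route as the paper: local rigidity at $t\not\in\{t_q^{\h}\}$ via Proposition~\ref{prop:localrigidity}, bifurcation at each $t_q^{\h}$, $q>0$, via the Morse index jump of $m_q(\Hr P^n)>0$ from Proposition~\ref{prop:morseindexh} combined with Proposition~\ref{prop:bifmorseindex}, and the trivial bifurcation at $t_0^{\h}=1$ handled separately. Your explicit choice of non-degenerate endpoints $a,b$ isolating $t_q^{\h}$, and the observation that the bifurcation value produced must itself be a degeneracy value, merely spell out bookkeeping the paper leaves implicit.
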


\begin{proof}
From Proposition~\ref{prop:morseindexh}, at every degeneracy value $t_q^{\h}$, $q>0$, the Morse index $N(\hs)$ increases by $m_q(\Hr P^n)>0$. Therefore, by Proposition~\ref{prop:bifmorseindex}, every degeneracy value $t_q^{\h}$, $q>0$, is a bifurcation value. In addition, $t_0^{\h}=1$ is trivially a bifurcation value, since the critical points of the Hilbert-Einstein functional in the conformal class of $\h_1$ form a manifold of dimension $4n+4$. Local rigidity of $\hs$ around $t\not\in\{t_q^{\h}\}$ follows from Proposition~\ref{prop:localrigidity}.
\end{proof}

\begin{remark}\label{rem:breaksymm}
At all bifurcation values $t_q^{\h}$, a \emph{break of symmetry} occurs, in the sense that all bifurcating solutions are \emph{non-homogeneous} metrics. This follows from the classification of homogenous metrics on $S^m$, since they are pairwise non-conformal.
\end{remark}

\section{\texorpdfstring{Homogeneous spheres $(S^{15},\kt)$}{Homogeneous spheres III}}\label{sec:s15t}

\subsection{Spectrum of the Laplacian}
Analogously to the case before, from \eqref{eq:lambdakj}, all eigenvalues of $\Delta_t$ are of the form
\begin{equation*}
\lambda^{k,j}(t)=k(k+14)+\left(\tfrac{1}{t^2}-1\right)j(j+6),
\end{equation*}
for some $k,j\in\N\cup\{0\}$. Using the same observations as in Tanno~\cite{tanno,tanno2} for the previous families, it is easy to obtain the following refinements.

\begin{lemma}\label{lemma:tanno15}
For each eigenvalue $\mu_k=k(k+14)$ of $S^{15}$, denote its corresponding eigenspace by $E_k\subset L^2(S^{15},\vol_g)$. Then $E_k$ has a orthogonal decomposition in simultaneous eigenspaces
\begin{equation*}
E_k=E_k^k+E_k^{k-2}+\dots+E_k^{k-2\left\lfloor k/2\right\rfloor},
\end{equation*}
where $\left\lfloor k/2 \right\rfloor$ denotes the largest integer less than or equal to $k/2$, and for each $\psi\in E_k^j$, $\Delta_v \psi=j(j+6) \psi$. In particular, $\lambda^{k,j}(t)$ can only be an eigenvalue of $\Delta_t$ if $0\leq j\leq k$ and $k-j$ is even, i.e., the eigenvalues of $\Delta_t$ can only be among
\begin{equation}\label{eq:lambdas15}
\lambda^{k,j}(t)=k(k+14)+\left(\tfrac{1}{t^2}-1\right)j(j+6), \quad j=k,k-2,\dots,k-2\left\lfloor k/2 \right\rfloor.
\end{equation}
\end{lemma}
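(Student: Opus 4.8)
The plan is to transcribe verbatim the strategy of Lemmas~\ref{lemma:tanno} and~\ref{lemma:tanno4n+3}, changing only the dimension and spectrum of the fiber. Here the fibers are round $7$-spheres, whose Laplacian has spectrum $\{j(j+6):j\in\N\cup\{0\}\}$ (the general formula $j(j+m-1)$ with $m=7$). Since $\Delta_v$ acts on each fiber as the fiber Laplacian, its eigenvalues are exactly the numbers $j(j+6)$, which already accounts for the asserted vertical eigenvalue $\Delta_v\psi=j(j+6)\psi$ on $E_k^j$. By Theorem~\ref{thm:bbb} the operators $\Delta_M$ and $\Delta_v$ commute, so $\Delta_v$ preserves each finite-dimensional $\Delta_M$-eigenspace $E_k$; being self-adjoint there, it diagonalizes, producing the orthogonal splitting of $E_k$ into simultaneous eigenspaces $E_k^j$. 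It therefore remains only to determine which values of $j$ can occur.

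First I would invoke the standard description of $E_k$: every $\psi\in E_k$ is the restriction to $S^{15}\subset\R^{16}$ of a harmonic homogeneous polynomial $P$ of degree $k$ on $\R^{16}$. The geometric input that makes the argument run is that each Hopf fiber $S^7$ is a \emph{totally geodesic great $7$-sphere} in $S^{15}$, i.e., the intersection $S^{15}\cap W$ with an $8$-dimensional linear subspace $W\subset\R^{16}$. Consequently $\psi|_{S^7}$ is the restriction to $S^7$ of $P|_W$, a homogeneous polynomial of degree $k$ on $W\cong\R^8$.

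Next I would apply the classical decomposition of homogeneous polynomials into harmonics: any degree-$k$ homogeneous polynomial $Q$ on $\R^8$ can be written as $Q=\sum_{i=0}^{\lfloor k/2\rfloor}|x|^{2i}H_{k-2i}$ with each $H_{k-2i}$ harmonic homogeneous of degree $k-2i$. Restricting to $S^7$, where $|x|^2=1$, expresses $\psi|_{S^7}$ as a sum of spherical harmonics on $S^7$ of degrees $k,k-2,\dots,k-2\lfloor k/2\rfloor$. Since the degree-$j$ spherical harmonics on $S^7$ are precisely the $\Delta_{S^7}$-eigenfunctions of eigenvalue $j(j+6)$, all vertical eigenvalues appearing in $\psi|_{S^7}$ have the form $j(j+6)$ with $0\le j\le k$ and $k-j$ even. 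This forces $E_k^j=\{0\}$ outside that range, yielding the stated decomposition, the constraint on $j$, and hence the admissible list~\eqref{eq:lambdas15}.

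The only genuinely non-formal ingredient—and the step I expect to be the main obstacle—is the claim that the Hopf fibers are great $7$-spheres realized as linear slices $S^{15}\cap W$, since this is exactly what permits restricting the ambient polynomial $P$ to the fiber and analyzing it with spherical-harmonic theory on $\R^8$. For the octonionic fibration $S^7\to S^{15}\to S^8\left(\tfrac12\right)$ this follows from the algebraic form of the octonionic Hopf construction and is already subsumed in the standing hypothesis that the fibers are totally geodesic; once it is recorded, the remainder is a direct transcription of Tanno's argument.
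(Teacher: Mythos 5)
Your proposal is correct and is essentially the paper's own argument: the paper gives no separate proof for this lemma, deferring to the same Tanno-style reasoning as in Lemmas~\ref{lemma:tanno} and~\ref{lemma:tanno4n+3}, namely restricting degree-$k$ harmonic homogeneous polynomials to the Hopf fibers and reading off the admissible fiber degrees $k,k-2,\dots$ Your write-up simply makes explicit the two details the paper leaves implicit --- that the octonionic Hopf fibers are great $7$-spheres $S^{15}\cap W$ (which, as you note, already follows from their being complete totally geodesic submanifolds of the round sphere), and the classical decomposition $Q=\sum_i |x|^{2i}H_{k-2i}$ on $\R^8$ --- so it is a faithful, slightly more detailed version of the same proof.
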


\begin{lemma}\label{lemma:tanno215}
The following $E_k^j$ are non-trivial
\begin{itemize}
\item[(i)] $E_k^k$, for any $k$;
\item[(ii)] $E_k^0$ if $k$ is even.
\end{itemize}
\end{lemma}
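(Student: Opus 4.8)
The plan is to prove both non-triviality statements by exhibiting explicit harmonic homogeneous polynomials on $\R^{16}$, exactly as in Tanno's treatment of the complex and quaternionic Hopf fibrations underlying Lemmas~\ref{lemma:tanno2} and~\ref{lemma:tanno24n+3}. Throughout I use that $E_k^j$ is non-trivial precisely when some harmonic homogeneous polynomial $P$ of degree $k$ on $\R^{16}=\Ca\oplus\Ca$ restricts, on the $S^7$ Hopf fibers, to a function with a non-zero spherical-harmonic component of degree $j$ (so that $\Delta_v$ acts on it with eigenvalue $j(j+6)$, in accordance with \eqref{eq:lambdas15}). I write a point of $\R^{16}=\Ca\oplus\Ca$ as $(x,y)$, and recall that the octonionic Hopf map $S^{15}\to S^8\left(\tfrac12\right)$ has real component $|x|^2-|y|^2$, which is by definition constant along the fibers.

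For part (i), I would first single out one convenient fiber. The preimage under the Hopf map of the pole with real coordinate $1$ is cut out by $|x|^2-|y|^2=1$ together with $|x|^2+|y|^2=1$, forcing $y=0$; hence $F_0=\{(x,0):|x|=1\}$ is a genuine Hopf fiber, a totally geodesic unit great $S^7\subset S^{15}$. Next, for any $k$ I choose a harmonic homogeneous polynomial $q$ of degree $k$ in the eight coordinates of $x$ alone. Viewed on $\R^{16}$ it remains harmonic (it does not involve $y$) and homogeneous of degree $k$, so its restriction $\psi=q|_{S^{15}}$ lies in $E_k$. On $F_0$ we have $\psi|_{F_0}=q|_{S^7}$, a spherical harmonic of degree exactly $k$. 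Decomposing $\psi=\sum_j\psi_j$ with $\psi_j\in E_k^j$ and restricting to $F_0$, each $\psi_j|_{F_0}$ is a degree-$j$ spherical harmonic on $S^7$; by orthogonality of spherical harmonics of distinct degrees, the degree-$k$ part $\psi_k|_{F_0}=q|_{S^7}$ is non-zero, whence $\psi_k\neq0$ and $E_k^k\neq0$.

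For part (ii), I would exploit that the Hopf map is given by quadratic polynomials, so that functions pulled back from the base land in \emph{even} degrees. The piece $E_k^0$ consists of the fiber-constant functions in $E_k$, namely those descending to the base $S^8\left(\tfrac12\right)$. The simplest instance is $h:=|x|^2-|y|^2$, a harmonic homogeneous polynomial of degree $2$ that is fiber-constant, giving $E_2^0\neq0$. For an arbitrary even degree $k=2m$ I would take the degree-$m$ zonal spherical harmonic on $S^8\left(\tfrac12\right)$ centered at the pole, a degree-$m$ polynomial in the axial coordinate, and pull it back through $h$ to obtain a fiber-constant polynomial of degree $2m$ on $\R^{16}$. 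Its orthogonal projection onto $E_{2m}$ is its top-degree harmonic component; this is non-zero because the leading term $h^m$ is not divisible by $|x|^2+|y|^2$ (the quadratic forms $h$ and $|x|^2+|y|^2$ being distinct irreducibles), so $h^m$ has a non-vanishing harmonic part of degree $2m$. Since the whole function is fiber-constant, so is each of its $E_k$-components, and therefore $E_{2m}^0\neq0$.

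The only point requiring care — and the one I would treat as the main obstacle — is the bookkeeping guaranteeing that the constructed functions have non-zero projection onto the intended simultaneous eigenspace: for (i) that the degree-$k$ restriction to $F_0$ survives the decomposition, and for (ii) that the top-degree harmonic projection of $h^m$ does not vanish. Both are purely algebraic facts about spherical harmonics and carry no octonionic subtleties, since the construction only uses the real component of the Hopf map and a single pole fiber; in particular, the argument is formally identical to Tanno's for the $S^1$- and $S^3$-fibrations, which is why the refinement is easy once the dictionary above is in place.
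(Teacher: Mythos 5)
Your proposal is correct and takes essentially the approach the paper itself relies on: the paper gives no written proof of this lemma, deferring to Tanno's method of exhibiting explicit spherical harmonics, and that is exactly what you carry out --- harmonic polynomials in the first $\Ca$-factor restricted to the distinguished fiber $\{(x,0):|x|=1\}$ for (i), and fiber-constant polynomials built from $|x|^2-|y|^2$, whose top-degree harmonic projection survives because the irreducible form $|x|^2+|y|^2$ does not divide $(|x|^2-|y|^2)^m$, for (ii). Both of the verifications you flag as the crux (survival of the degree-$k$ component on the fiber, and non-vanishing of the harmonic part of $h^m$) are sound as you state them, so the argument is complete.
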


With the same techniques, we may compute the first eigenvalue of $(S^{15},\k_t)$.

\begin{proposition}\label{prop:firsteig15}
The first non-zero eigenvalue of the Laplacian of $(S^{15},\k_t)$ is
\begin{equation*}
\lambda_1(t)=\begin{cases} 32, & 0<t\leq\sqrt{\tfrac{7}{24}} \\ 8+\tfrac{7}{t^2}, & t\geq\sqrt{\tfrac{7}{24}}. \end{cases}
\end{equation*}
\end{proposition}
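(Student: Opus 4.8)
The plan is to follow verbatim the strategy of Proposition~\ref{prop:firsteig}, splitting the computation of the first non-zero eigenvalue into the contribution $r_2(t)$ coming from the ``fiber'' eigenfunctions ($1\le j\le k$) and the contribution $r_1(t)$ coming from the ``base'' eigenfunctions ($j=0$), and then setting $\lambda_1(t)=\min\{r_1(t),r_2(t)\}$. By Lemma~\ref{lemma:tanno15}, every non-zero eigenvalue of $\Delta_t$ is of the form $\lambda^{k,j}(t)=k(k+14)+\left(\tfrac1{t^2}-1\right)j(j+6)$ with $0\le j\le k$ and $k-j$ even, and by Lemma~\ref{lemma:tanno215} we know that $E_k^k$ is non-trivial for every $k$ and $E_k^0$ is non-trivial for $k$ even; these are the only occurrences I will invoke.

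First I would treat the case $1\le j\le k$. For fixed $j$, the quantity $\lambda^{k,j}(t)$ is increasing in $k$, since $k\mapsto k(k+14)$ is increasing and the remaining term does not depend on $k$; hence the minimum over admissible $k\ge j$ is attained at $k=j$, giving $\lambda^{j,j}(t)=8j+\tfrac{j(j+6)}{t^2}$. This expression is manifestly increasing in $j\ge 1$ for every fixed $t>0$, so its minimum over $j\ge 1$ is $\lambda^{1,1}(t)=8+\tfrac{7}{t^2}$. Since $E_1^1$ is non-trivial, $\lambda^{1,1}(t)\in\sigma(\Delta_t)$, and therefore $r_2(t)=8+\tfrac7{t^2}$.

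Next, for $j=0$ we have $\lambda^{k,0}(t)=k(k+14)$, and the parity constraint together with Lemma~\ref{lemma:tanno215}(ii) forces $k$ to be even for this value to occur in $\sigma(\Delta_t)$; discarding the zero eigenvalue $k=0$, the smallest admissible choice is $k=2$, giving $r_1(t)=\lambda^{2,0}(t)=32$, which lies in $\sigma(\Delta_t)$ because $E_2^0$ is non-trivial. Finally I would compare the two constants: solving $32=8+\tfrac7{t^2}$ gives the threshold $t=\sqrt{\tfrac{7}{24}}$, and a direct sign analysis shows $8+\tfrac7{t^2}\ge 32$ precisely when $0<t\le\sqrt{\tfrac{7}{24}}$. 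Taking $\lambda_1(t)=\min\{r_1(t),r_2(t)\}$ then yields the claimed piecewise formula.

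There is no serious obstacle here, as the argument runs parallel to the $(S^{2n+1},\g_t)$ and $(S^{4n+3},\h_t)$ cases; the only point deserving care is the minimization in the regime $1\le j\le k$, where the coefficient $\tfrac1{t^2}-1$ changes sign at $t=1$. I avoid having to split on this sign by reducing first in $k$ and then in $j$, so that each reduction uses only the monotonicity of an elementary quadratic and never the sign of $\tfrac1{t^2}-1$; this establishes $\lambda^{k,j}(t)\ge\lambda^{1,1}(t)$ uniformly in $t>0$.
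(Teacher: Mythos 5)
Your proof is correct and is essentially the paper's own argument: the paper omits the proof of this proposition, stating only that it follows ``with the same techniques'' as Proposition~\ref{prop:firsteig}, and your two-step reduction ($r_1$ from the $j=0$ eigenvalues with the parity constraint forcing $k=2$, and $r_2$ from $1\le j\le k$ reduced to $\lambda^{1,1}(t)=8+\tfrac{7}{t^2}$ via Lemmas~\ref{lemma:tanno15} and~\ref{lemma:tanno215}) is exactly that template. Your minimization first in $k$ and then in $j$ is a mild, clean reorganization of the paper's one-line algebraic bound and requires no case split on the sign of $\tfrac{1}{t^2}-1$, just as in the original.
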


\subsection{Bifurcation and local rigidity}
Analogously to the case before, we have:

\begin{proposition}\label{prop:morseindexk}
The degeneracy values for $\kt$ form a sequence $\{t_q^{\k}\}$, with $t_q^{\k}\to 0$ as $q\to\infty$, given by $t_0^{\k}=1$ and for $q>0$,
\begin{equation}\label{eq:tqk}
t_q^{\k}=\sqrt{2-\tfrac{7q}{2}-\tfrac{q^2}{2}+\tfrac12\sqrt{3+(q^2+7q-4)}}.
\end{equation}
The Morse index of $\kt$ is given by
\begin{equation}\label{eq:morsek}
N(\kt)=\begin{cases} \sum_{q=1}^r m_q(S^8), & t^{\k}_{r+1}\leq t<t^{\k}_r\\
0, & t\geq t^{\k}_1,
\end{cases}
\end{equation}
where $m_q(S^8)$ is the multiplicity of the $q^{th}$ eigenvalue of $S^8$, see \cite{bgm}. In particular, it changes whenever $t$ crosses a degeneracy value $t_q^{\k}$, is positive for $t<t_1^{\k}$ and gets arbitrarily large for small $t>0$.
\end{proposition}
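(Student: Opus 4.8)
The proof runs in complete parallel to that of Proposition~\ref{prop:morseindexh}, now using the scalar curvature \eqref{eq:scal15t} and the spectral description \eqref{eq:lambdas15} together with Lemma~\ref{lemma:tanno215}. The plan is to compare $\tfrac{1}{14}\scal(S^{15},\k_t)=\tfrac{3}{t^2}+16-4t^2$ (note $\dim S^{15}=15$, so $m-1=14$) against each admissible eigenvalue branch $\lambda^{k,j}(t)$. Accordingly, I would set
\begin{equation*}
\varphi_{k,j}(t):=\tfrac{1}{14}\scal(S^{15},\k_t)-\lambda^{k,j}(t)=\tfrac{3-j(j+6)}{t^2}-4t^2+\big(16+j(j+6)-k(k+14)\big),
\end{equation*}
and study its zeros, distinguishing the cases $1\le j\le k$ and $j=0$ exactly as before.

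For $1\le j\le k$ one has $j(j+6)\ge7>3$, so the coefficient of $t^{-2}$ is negative and $\varphi_{k,j}(t)\to-\infty$ both as $t\to0$ and as $t\to+\infty$; differentiating gives a single critical point at $t^4=\tfrac14\big(j(j+6)-3\big)$, necessarily a global maximum. Evaluating there, the maximum equals $16+j(j+6)-k(k+14)-4\sqrt{j(j+6)-3}$, and the crux of the argument is to verify that this quantity is $\le0$, with equality only when $k=j=1$ (where the critical point is $t=1$). Since $k\ge j$ forces $k(k+14)\ge j(j+14)$, the worst case is $k=j$, where the bound reduces to $16-8j\le 4\sqrt{j(j+6)-3}$; this holds for all $j\ge1$, with equality precisely at $j=1$, while increasing $k$ beyond $j$ only makes $16+j(j+6)-k(k+14)$ more negative and hence strengthens it. This verification is the main (though elementary) obstacle. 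For $j=0$ instead, $\varphi_{k,0}(t)=\tfrac{3}{t^2}-4t^2+16-k(k+14)$ runs from $+\infty$ to $-\infty$ with $\varphi_{k,0}'(t)=-\tfrac{6}{t^3}-8t<0$ throughout, so it has a unique zero. Hence $\tfrac{1}{14}\scal(S^{15},\k_t)$ meets a branch $\lambda^{k,j}(t)$ only when $j=0$, apart from the tangential contact at $k=j=1$, $t=1$.

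By Lemma~\ref{lemma:tanno215}, $\lambda^{k,0}(t)\in\sigma(\Delta_t)$ exactly when $k$ is even, so writing $k=2q$ identifies the degeneracy values $t_q^{\k}$ ($q>0$) as the unique zeros of $\varphi_{2q,0}$; solving the resulting biquadratic $4t^4-(16-4q^2-28q)t^2-3=0$ for its positive root yields the closed form \eqref{eq:tqk}, and inspection shows $t_q^{\k}\to0$ as $q\to\infty$. Finally, for the Morse index I would argue as in Proposition~\ref{prop:morseindexh}: for $t\ge t_1^{\k}$ no positive eigenvalue lies below $\tfrac{1}{14}\scal$, so $N(\kt)=0$; and as $t$ decreases across each $t_q^{\k}$ the branch $\lambda^{2q,0}(t)$ drops below $\tfrac{1}{14}\scal(S^{15},\k_t)$, increasing the index by $\dim E_{2q}^0$. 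Since $E_{2q}^0$ consists of the lifts to $S^{15}$ of the eigenfunctions of the $q^{th}$ eigenvalue of the base $S^8(\tfrac12)$ (whose eigenvalue multiplicities coincide with those of $S^8$, as rescaling does not affect them), this increment is $m_q(S^8)$, giving \eqref{eq:morsek}; in particular $N(\kt)$ is positive for $t<t_1^{\k}$ and grows without bound as $t\to0$.
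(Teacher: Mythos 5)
Your proposal is correct and follows essentially the same route as the paper, whose entire proof of this proposition is the single sentence ``totally analogous to the proof of Proposition~\ref{prop:morseindexh}''; you have simply executed that analogy in full, and every step checks out (the maximum value $16+j(j+6)-k(k+14)-4\sqrt{j(j+6)-3}$ for $1\le j\le k$, the inequality $16-8j\le 4\sqrt{j(j+6)-3}$ with equality only at $k=j=1$, $t=1$, the strict monotonicity in the case $j=0$, and the identification of $\dim E_{2q}^0$ with $m_q(S^8)$ via lifts from the base $S^8(\tfrac12)$). One useful byproduct: solving your (correct) biquadratic $4t^4-(16-4q^2-28q)t^2-3=0$ gives $t_q^{\k}=\sqrt{2-\tfrac{7q}{2}-\tfrac{q^2}{2}+\tfrac12\sqrt{3+(q^2+7q-4)^2}}$, which reveals that the printed formula \eqref{eq:tqk} contains a typo --- the term $(q^2+7q-4)$ under the inner radical must be squared, since otherwise the radicand of the outer square root is already negative for $q=1$.
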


\begin{proof}
Totally analogous to the proof of Proposition \ref{prop:morseindexh}.
\end{proof}

\begin{theorem}\label{thm:s15}
There exists a sequence $\{t_q^{\k}\}$ given by \eqref{eq:tqk}, with $t_q^{\k}\to 0$ as $q\to\infty$, of bifurcation values for $(S^{15},\k_t)$. The family is locally rigid at any $t\not\in\{t_q^{\k}\}$.
\end{theorem}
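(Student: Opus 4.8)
*There exists a sequence $\{t_q^{\k}\}$ given by \eqref{eq:tqk}, with $t_q^{\k}\to 0$ as $q\to\infty$, of bifurcation values for $(S^{15},\k_t)$. The family is locally rigid at any $t\not\in\{t_q^{\k}\}$.*

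The plan is to mirror exactly the structure of the proof of Theorem~\ref{thm:s4n+3}, since the $\Spin(9)$-case is set up to be completely parallel to the $\Sp(n+1)$-case on $S^{4n+3}$. The essential input has already been packaged into Proposition~\ref{prop:morseindexk}, so the proof reduces to invoking the two abstract criteria from Section~\ref{sec:var} at the appropriate values of $t$.

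Let me think about what I'd actually prove. First I would observe that, by Proposition~\ref{prop:morseindexk}, at every degeneracy value $t_q^{\k}$ with $q>0$ the Morse index $N(\kt)$ jumps by $m_q(S^8)$, the multiplicity of the $q^{th}$ eigenvalue of the base $S^8$. Since this multiplicity is strictly positive, the Morse index genuinely changes as $t$ crosses $t_q^{\k}$. I would then pick, for each $q$, values $a<t_q^{\k}<b$ close enough to $t_q^{\k}$ that no other degeneracy value lies in $[a,b]$ (this is possible because the degeneracy values form a discrete sequence accumulating only at $0$). Then $a$ and $b$ are not degeneracy values and $N(\k_a)\neq N(\k_b)$, so Proposition~\ref{prop:bifmorseindex} yields a bifurcation value in $]a,b[$, which must be $t_q^{\k}$ itself since it is the only degeneracy value — and hence the only possible bifurcation value, by Proposition~\ref{prop:localrigidity} — in that interval.

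Next I would dispose of the endpoint $t_0^{\k}=1$ separately: as for the round metric in all three families, it is a trivial bifurcation value because the critical points of the Hilbert–Einstein functional in the conformal class of the round sphere $\k_1$ form a manifold of dimension $16$, arising from the non-isometric conformal transformations of $S^{15}$. Finally, for local rigidity at every $t\not\in\{t_q^{\k}\}$, I would note that such a $t$ is by definition not a degeneracy value, so Proposition~\ref{prop:localrigidity} applies directly and gives local rigidity there.

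I do not expect any genuine obstacle in this proof, as the analytic work has been fully absorbed into Proposition~\ref{prop:morseindexk}. The only point requiring a little care is the conversion of a \emph{change} of Morse index on an interval into bifurcation at the \emph{specific} value $t_q^{\k}$: Proposition~\ref{prop:bifmorseindex} only guarantees a bifurcation value somewhere in $]a,b[$, so one must use the discreteness of the degeneracy set together with Proposition~\ref{prop:localrigidity} to localize it at $t_q^{\k}$. This is the same bookkeeping already implicit in the proof of Theorem~\ref{thm:s4n+3}, so in practice the proof can simply read ``totally analogous to the proof of Theorem~\ref{thm:s4n+3}, using Proposition~\ref{prop:morseindexk}.''
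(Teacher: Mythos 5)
Your proposal is correct and follows essentially the same route as the paper, whose proof of Theorem~\ref{thm:s15} is literally ``totally analogous to the proof of Theorem~\ref{thm:s4n+3}'': invoke Proposition~\ref{prop:morseindexk} for the positive Morse index jumps $m_q(S^8)>0$, Proposition~\ref{prop:bifmorseindex} to get bifurcation at each $t_q^{\k}$ with $q>0$, the $16$-dimensional manifold of critical points in $[\k_1]$ for the trivial bifurcation at $t_0^{\k}=1$, and Proposition~\ref{prop:localrigidity} for local rigidity elsewhere. Your additional bookkeeping (choosing $a<t_q^{\k}<b$ free of other degeneracy values and using discreteness to pin the bifurcation value at $t_q^{\k}$) is left implicit in the paper but is exactly the right way to make that step precise.
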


\begin{proof}
Totally analogous to the proof of Theorem \ref{thm:s4n+3}.
\end{proof}

\begin{remark}
Analogously to Remark~\ref{rem:breaksymm}, break of symmetry also occurs at all bifurcation values $t_q^{\k}$, i.e., bifurcating solutions are non-homogeneous metrics.
\end{remark}

\section{Remarks on uniqueness and multiplicity of solutions}
\label{sec:final}

Given a compact manifold $M$, we know that a solution to the Yamabe problem exists in every conformal class of Riemannian metrics on $M$, given by a minimizer of the Hilbert-Einstein functional \eqref{eq:a}. A natural question is then to study the nature of the space of solutions to the Yamabe problem in a given conformal class. In this direction, Brendle and Marques~\cite{BreMar09} and Khuri, Marques and Schoen \cite{KhuMarSch} obtained remarkable results on compactness and non-compactness of this space of solutions.

Another interesting question is to establish whether \emph{uniqueness} of (unit volume) solutions to the Yamabe problem holds in a given conformal class. Very little is known on this problem; for instance, uniqueness was proven in the conformal class of every Einstein metric (except for the round metric on spheres), see Obata \cite{Oba72}. Our local rigidity results for the metrics $\gt$, $\hs$ and $\kt$ on spheres imply \emph{local uniqueness} of solutions. We can prove that, in the case of these three families of metrics, the uniqueness property is \emph{stable}; more precisely:

\begin{proposition}\label{prop:stability}
The following sets are open in $\mathds R^+$:
\begin{eqnarray*}
&& \mathcal G=\left\{t\in\mathds R^+\setminus\{1\}: \begin{array}{c}
\text{\rm there is a unique }g\in\Met_1^k(S^{2n+1})\cap[\gt]\\ \text{\rm with } \scal(g)=\text{\rm const.}
\end{array}\right\}, \\
&& \mathcal H=\left\{t\in\mathds R^+\setminus\{t^\h_q\}: \begin{array}{c}
\text{\rm there is a unique }g\in\Met_1^k(S^{4n+3})\cap[\hs]\\ \text{\rm with } \scal(g)=\text{\rm const.}
\end{array}\right\}, \\
&& \mathcal K=\left\{t\in\mathds R^+\setminus\{t^\k_q\}: \begin{array}{c}
\text{\rm there is a unique }g\in\Met_1^k(S^{15})\cap[\kt]\\ \text{\rm with } \scal(g)=\text{\rm const.}
\end{array}\right\}. \\
\end{eqnarray*}
\end{proposition}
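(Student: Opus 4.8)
The plan is to prove openness of each set by showing that its complement---taken relative to the domain on which the homogeneous solutions are nondegenerate---is closed. I carry this out for $\mathcal G$; the cases $\mathcal H$ and $\mathcal K$ are handled verbatim, invoking Theorems~\ref{thm:s4n+3} and~\ref{thm:s15} in place of Theorem~\ref{thm:s2n+1}. Fix $t_0\in\mathcal G$ and suppose, for contradiction, that no neighborhood of $t_0$ in $\R^+$ is contained in $\mathcal G$. Then there is a sequence $t_q\to t_0$ with $t_q\notin\mathcal G$. Since $t_0\neq1$, for large $q$ we have $t_q\neq1$, so $t_q\notin\mathcal G$ forces non-uniqueness at $t_q$: besides the homogeneous solution $\g_{t_q}$ (normalized to unit volume), there is a second unit volume constant scalar curvature metric $\tilde g_q\in[\g_{t_q}]$ with $\tilde g_q\neq\g_{t_q}$.

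The heart of the argument is to pass to the limit in $\{\tilde g_q\}$. Writing $\tilde g_q=\phi_q\,\g_{t_q}$, this requires uniform a priori estimates on the conformal factors $\phi_q$ as the conformal classes $[\g_{t_q}]$ converge in $C^{k,\alpha}$ to $[\g_{t_0}]$. I would supply these from the compactness theory for the Yamabe problem, in the form of bounds on the solution set that persist under $C^{k,\alpha}$-perturbation of the background metric; the hypothesis $t_0\neq1$ is essential here, as it keeps $\g_{t_0}$ a genuine Berger metric and away from the round conformal class, the one class in which such compactness is known to fail. Granting this, a subsequence satisfies $\tilde g_q\to g_\infty$ in $C^{k,\alpha}$, and $g_\infty\in[\g_{t_0}]$ is a unit volume metric of constant scalar curvature.

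A dichotomy on the limit then closes the argument. If $g_\infty\neq\g_{t_0}$, then $g_\infty$ is a second unit volume constant scalar curvature metric in $[\g_{t_0}]$, contradicting $t_0\in\mathcal G$. If instead $g_\infty=\g_{t_0}$, then, since $t_0\neq1$ is not a degeneracy value, Proposition~\ref{prop:localrigidity} furnishes a local rigidity neighborhood $U$ of $\g_{t_0}$ as in Definition~\ref{def:biflocrig}; for $q$ large both $\g_{t_q}$ and $\tilde g_q$ lie in $U$, they are conformal with equal (unit) volume, and $\tilde g_q$ has constant scalar curvature, whence $\tilde g_q=\g_{t_q}$---contradicting $\tilde g_q\neq\g_{t_q}$. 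Either branch is impossible, so $t_0$ is interior to $\mathcal G$ and $\mathcal G$ is open.

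The main obstacle is exactly the compactness input of the second step: ruling out blow-up of $\{\tilde g_q\}$ uniformly as the background metric varies. The two delicate points are that the a priori estimates must hold uniformly along the converging family $\{\g_{t_q}\}$, and that the round conformal class must be excluded. For $\mathcal G$ the latter is ensured by $t\neq1$; for $\mathcal H$ and $\mathcal K$ it is ensured by excising the degeneracy sequences $\{t_q^{\h}\}$ and $\{t_q^{\k}\}$, which contain $t=1$ and on whose complements Theorems~\ref{thm:s4n+3} and~\ref{thm:s15} guarantee that local rigidity remains available.
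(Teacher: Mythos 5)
Your overall architecture coincides with the paper's: argue by contradiction along a sequence $t_q\to t_0$ of non-uniqueness values, use compactness of the Yamabe solution set to extract a $C^2$-convergent subsequence of the extra solutions, identify the limit, and rule out both alternatives using uniqueness at $t_0$ together with the nondegeneracy/local rigidity of Proposition~\ref{prop:localrigidity}. Your closing dichotomy (limit $\neq$ normalized homogeneous metric versus limit $=$ normalized homogeneous metric) is a correct, slightly reorganized version of the paper's final step, in which both limits are forced to equal $\overline{\g}_{t_*}$ by uniqueness and the contradiction then comes from nondegeneracy.

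However, the step you yourself flag as the main obstacle --- the uniform a priori estimates needed to pass to the limit --- is a genuine gap, and the justification you sketch for it is incorrect. You claim compactness holds because $t_0\neq 1$ keeps the family ``away from the round conformal class, the one class in which such compactness is known to fail.'' That is not a theorem: compactness of the set of unit volume constant scalar curvature metrics in a conformal class is \emph{not} known for arbitrary non-round classes, and it actually \emph{fails} for suitable non-round metrics in dimension $\geq 25$, see Brendle and Brendle--Marques~\cite{BreMar09}. Since the families $\gt$ on $S^{2n+1}$ and $\hs$ on $S^{4n+3}$ occur in arbitrarily high dimensions, dimension-restricted compactness theorems such as Khuri--Marques--Schoen~\cite{KhuMarSch} (valid for $\dim\leq 24$) do not cover them either. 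The ingredient that actually closes the argument, and which is the paper's key observation, is pointwise control of the Weyl tensor: for $t\neq 1$ the metric $\gt$ is not conformally flat, so its Weyl tensor is not identically zero, and by \emph{homogeneity} it vanishes nowhere. This places all the conformal classes $[\g_{t_q}]$, uniformly in $q$, within the scope of the $C^{2,\alpha}$ a priori estimates of Li--Zhang~\cite{LiZhang,LiZhang2} and Marques~\cite{coda}, which are stable under perturbation of the background metric; this is what yields the $C^2$-compactness of the full solution set along the sequence. Without this (or an equivalent) input your limit $g_\infty$ cannot be extracted, and the same Weyl-tensor argument --- not merely excising the degeneracy values $\{t_q^{\h}\}$ and $\{t_q^{\k}\}$ --- is what is required for the families $\hs$ and $\kt$ as well.
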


\begin{proof}
Let $t_*\in\mathcal G$ be fixed, and assume by contradiction the existence of a sequence $\{t_k\}$ in $\mathds R^+$, with $\lim_{k\to\infty}t_k=t_*$ such that the conformal class $[\g_{t_k}]$ has two distinct unit volume constant scalar curvature metrics, denoted by $g^{(1)}_k$ and $g^{(2)}_k$. We observe that for all $t\ne1$, $\gt$ is \emph{not} conformally flat, and in particular its Weyl tensor does not vanish identically. By homogeneity, it therefore does not vanish anywhere. It follows that the $C^{2,\alpha}$-\emph{a priori} estimates proved by Li and Zhang~\cite{LiZhang,LiZhang2} and Marques~\cite{coda} hold for the metrics in $[\g_{t_k}]$, which implies that the set
\begin{equation*}
\Big\{g\in[\g_t],\ t\in\{t_k,k\in\mathds N\}\cup\{t_*\}: g\in\Met_1^k(S^{2n+1}),\, \scal(g)=\text{const.}\Big\}
\end{equation*}
is compact in the $C^{2}$ topology. Thus, up to subsequences, we can assume the existence of the limits
$\lim_{k\to\infty}g^{(1)}_k=g^{(1)}_\infty$ and $\lim_{k\to\infty}g^{(2)}_k=g^{(2)}_\infty$. Clearly, $g^{(i)}_\infty$ is a unit volume
constant scalar curvature in the conformal class $[\g_{t_*}]$, $i=1,2$.
By the uniqueness at $t_*$, it must be $g^{(1)}_\infty=g^{(2)}_\infty=\overline{\g}_{t_*}$, where $\overline{\g}_{t_*}$ is the unit volume
metric homothetic to $\g_{t_*}$.\footnote{Recall that the Hilbert--Einstein variational problem is invariant by renormalization of the metrics, see Remark~\ref{rem:renorm}, and so are all the results of this paper.}
On the other hand, since $\overline{\g}_{t_*}$ is a nondegenerate critical point of the Hilbert-Einstein functional,
then for $k$ large enough also $g^{(1)}_k$ and $g^{(2)}_k$ are nondegenerate, and arbitrarily close to each other.
This contradicts the local rigidity at $t_k$, see Proposition~\ref{prop:localrigidity}, and proves that $\mathcal G$ is open.
The openness of $\mathcal H$ and $\mathcal K$ follows by a totally analogous argument.
\end{proof}

Conversely, one is also interested in establishing which conformal classes carry multiple unit volume metrics with constant scalar curvature. Our bifurcation results yield the following.

\begin{proposition}\label{prop:mult}
There exists infinite subsets $\widetilde{\mathcal H}$ and $\widetilde{\mathcal K}$ of $\,\left]0,1\right[$, with $0$ in their closure, such that for all $t\in\widetilde{\mathcal H}$ (respectively $t\in\widetilde{\mathcal K}$), the conformal class $[\hs]$ (respectively $[\kt]$) has at least $3$ distinct unit volume constant scalar curvature metrics.
\end{proposition}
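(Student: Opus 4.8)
The plan is to combine the bifurcation statements of Theorems~\ref{thm:s4n+3} and~\ref{thm:s15} with the residual isometric symmetry of the conformal classes: each family $[\hs]$ carries an isometric $\Sp(n+1)$-action and each $[\kt]$ an isometric $\Spin(9)$-action. The key point is that a \emph{single} non-homogeneous solution automatically produces a second one by acting with a suitable isometry, so that, together with the homogeneous metric itself, one obtains three distinct unit volume constant scalar curvature metrics.

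First I would use the bifurcation values to locate the relevant parameters. By Theorem~\ref{thm:s4n+3}, every $t_q^{\h}$ with $q\geq 1$ is a bifurcation value, and $t_q^{\h}\to 0$. By Definition~\ref{def:biflocrig}, for each such $q$ there is a sequence of parameters $t\to t_q^{\h}$ together with unit volume constant scalar curvature metrics $g\in[\h_t]$, with $g\neq\overline{\hs}$, where $\overline{\hs}$ denotes the unit volume metric homothetic to $\hs$. By the break of symmetry (Remark~\ref{rem:breaksymm}), each such $g$ is \emph{non-homogeneous}. Picking, for each $q$, one parameter from this bifurcating sequence sufficiently close to $t_q^{\h}$, I obtain an infinite set $\widetilde{\mathcal H}\subset\,]0,1[$ whose elements tend to $0$, so that $0$ lies in its closure.

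Next I would produce the third solution by symmetry. Fix $t\in\widetilde{\mathcal H}$ and the corresponding non-homogeneous solution $g\in[\hs]$. Since $\Sp(n+1)$ acts transitively on $S^{4n+3}$, any $\Sp(n+1)$-invariant metric would be homogeneous; as $g$ is not, there is $\gamma\in\Sp(n+1)$ with $\gamma^*g\neq g$. Because $\gamma$ is an isometry of $\hs$, it preserves the conformal class $[\hs]$, the volume, and the property of having constant scalar curvature, so $\gamma^*g$ is again a unit volume constant scalar curvature metric in $[\hs]$. The three metrics $\overline{\hs}$, $g$, $\gamma^*g$ are pairwise distinct: $g\neq\overline{\hs}$ by construction, $\gamma^*g\neq g$ by the choice of $\gamma$, and $\gamma^*g\neq\overline{\hs}$ since $\overline{\hs}$ is $\Sp(n+1)$-invariant, so $\gamma^*g=\overline{\hs}$ would force $g=(\gamma^{-1})^*\overline{\hs}=\overline{\hs}$. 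This yields at least three distinct solutions in $[\hs]$ for every $t\in\widetilde{\mathcal H}$. The construction of $\widetilde{\mathcal K}$ from Theorem~\ref{thm:s15}, using the transitive $\Spin(9)$-action on $S^{15}$ and the corresponding break of symmetry, is identical.

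The only delicate point is the non-invariance of the bifurcating solution $g$, namely that a genuinely non-homogeneous solution cannot be fixed by the entire isometry group; this is exactly where transitivity of the action is used, since invariance under a transitive group forces homogeneity. Everything else is bookkeeping of the bifurcation data together with the invariance of the Yamabe problem under isometries.
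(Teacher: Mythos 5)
Your argument is correct, but it takes a genuinely different route from the paper's. The paper gets its third solution from the solution of the Yamabe problem itself: for $t$ near a bifurcation value $t_q^{\h}$ with $q>1$, the renormalized homogeneous metric $\overline{\h}_t$ has positive Morse index by Proposition~\ref{prop:morseindexh}, and by continuity of eigenvalues under $C^{k,\alpha}$-perturbation the nearby bifurcating solution $\widetilde h$ has positive index as well; hence neither is a minimum of the Hilbert--Einstein functional on $[\hs]_1$, so the Yamabe minimizer (which exists by Yamabe--Trudinger--Aubin--Schoen) is a third, distinct critical point. You instead manufacture the third solution as $\gamma^*g$, exploiting the break of symmetry (Remark~\ref{rem:breaksymm}) together with transitivity of $\Sp(n+1)$, respectively $\Spin(9)$. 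Each route buys something. Yours avoids both the deep existence theorem for Yamabe minimizers and the Morse-index computation with its continuity argument, works at every bifurcation value $q\geq 1$ (the paper restricts to $q>1$ so that the index is guaranteed positive), and in fact proves more than stated: the stabilizer $\{\gamma\in\Sp(n+1):\gamma^*g=g\}$ is a proper closed subgroup of a connected Lie group, so the orbit of $g$ is a positive-dimensional compact family, giving \emph{infinitely many} distinct unit volume constant scalar curvature metrics in $[\hs]$. The paper's route, in turn, does not depend on Remark~\ref{rem:breaksymm} (whose justification, the pairwise non-conformality of homogeneous metrics, is itself a nontrivial input), and its three solutions are geometrically richer: since isometric critical points in the same conformal class have equal Morse index, the minimizer cannot be isometric to either unstable solution, whereas your $g$ and $\gamma^*g$ are isometric copies of one another; in particular the paper's proof shows that for these $t$ the homogeneous metric is \emph{not} the Yamabe minimizer of its own conformal class.
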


\begin{proof}
For $k>1$, consider the bifurcation value $t^\h_k$ for the family $\hs$, see Theorem~\ref{thm:s4n+3}.
For all $t$, denote by $\overline\h_t$ the unit volume metric homothetic to $\hs$. Arbitrarily close to $t^\h_k$ there are values $t$ such that the conformal class $[\hs]$ contains a unit volume constant scalar curvature metric $\widetilde h$ distinct from $\overline\h_t$. Since the Morse index of $\overline\h_t$ is positive (Proposition~\ref{prop:morseindexh}), by continuity, also the Morse index of $\widetilde h$ is positive. In particular, neither $\overline\h_t$ nor $\widetilde h$ are minima of the Hilbert-Einstein functional. Therefore, $[\hs]$ contains at least $3$ distinct unit volume constant scalar curvature metrics. Clearly, such $t$'s accumulate at $0$, since $\lim_{k\to\infty}t^\h_k=0$, see Theorem \ref{thm:s4n+3}. The argument for $\kt$ is totally analogous, using Theorem~\ref{thm:s15} and Proposition~\ref{prop:morseindexk}.
\end{proof}

\end{document}